\documentclass[reqno,letterpaper,11pt]{article}

\usepackage{hyperref}
\usepackage{amsthm}
\usepackage{times}
\usepackage{amsmath}
\usepackage{amsfonts,amssymb,amsmath,latexsym,wasysym,mathrsfs,bbm,stmaryrd,esint}
\usepackage[all]{xy}
\usepackage[usenames]{color}
\usepackage{epsfig}
\usepackage[mathcal]{euscript}
\usepackage{graphics,moreverb,hangcaption}

\pagestyle{headings}

\newtheorem{theorem}{Theorem}[section]
\newtheorem{proposition}[theorem]{Proposition}
\newtheorem{corollary}[theorem]{Corollary}
\newtheorem{lemma}[theorem]{Lemma}
\theoremstyle{definition}
\newtheorem{notation}[theorem]{Notation}
\newtheorem{definition}[theorem]{Definition}
\newtheorem{remark}[theorem]{Remark}
\newtheorem{example}[theorem]{Example}

\numberwithin{equation}{section}

\long\def\symbolfootnote[#1]#2{\begingroup%
\def\thefootnote{\fnsymbol{footnote}}\footnote[#1]{#2}\endgroup}

\newcommand{\C}{\mathbb{C}}
\newcommand{\RR}{\mathbb{R}}
\newcommand{\R}{\mathbb{R}}

\newcommand{\e}{\epsilon}

\newcommand{\LSH}{\mathrm{LSH}}
\newcommand{\supp}{\mathrm{supp}}
\newcommand{\del}{\partial}

\setlength{\textwidth}{6.8in}
\setlength{\textheight}{9in}
\oddsidemargin -0.15in
\evensidemargin -0.15in
\topmargin -0.75in
\pagestyle{plain}

\begin{document}

\title{Strong Logarithmic Sobolev Inequalities for Log-Subharmonic Functions}
\author{Piotr Graczyk\thanks{Partly supported by ANR-09-BLAN-0084-01} \\
  Universit\'e d'Angers, 2 Boulevard Lavoisier \\ 49045 Angers Cedex 01, France \\
  \texttt{Piotr.Graczyk@univ-angers.fr}
  \and 
  Todd Kemp\thanks{Partly supported by NSF Grant DMS-1001894 and NSF CAREER Award DMS-1254807} \\
  Department of Mathematics, University of California San Diego \\
  9500 Gilman Drive, La Jolla, CA \, 92093-0112 \, USA \\
  \texttt{tkemp@math.ucsd.edu}
  \and
  Jean-Jacques Loeb \\
  Universit\'e d'Angers, 2 Boulevard Lavoisier \\ 49045 Angers Cedex 01, France \\
  \texttt{Jean-Jacques.Loeb@univ-angers.fr}
}

\date{\today}

\maketitle

\begin{abstract} We prove an intrinsic equivalence between strong hypercontractivity (\ref{sHC}) and a strong logarithmic Sobolev inequality (\ref{sLSI}) for the cone of logarithmically subharmonic ($\LSH$) functions.  We introduce a new large class of measures, Euclidean regular and exponential type, in addition to all compactly-supported measures, for which this equivalence holds.  We prove a Sobolev density theorem through $\LSH$ functions, and use it to prove the equivalence of (\ref{sHC}) and (\ref{sLSI}) for such log-subharmonic functions.
\end{abstract}

\tableofcontents

\section{Introduction} \label{sect Introduction}


In this paper we study strong versions of logarithmic Sobolev inequalities (\ref{sLSI})  and strong hypercontractivity (\ref{sHC}) in the real spaces $\RR^n$ and for logarithmically subharmonic ($\LSH$) functions, continuing our research published in  \cite{GKL} and solving the conjecture on the equivalence between (\ref{sHC}) and (\ref{sLSI}) formulated in \cite[Remark 5.11]{GKL}. The main difficulty to overcome,
as already noticed by Gross and Grothaus in \cite{Gross Grothaus}, was efficient approximating of (logarithmically) subharmonic functions.  

If $\mu$ is a probability measure, the {\em entropy} functional $\mathrm{Ent}_\mu$ relative to $\mu$, defined on all sufficiently integrable positive test functions $g$, is
\[ \mathrm{Ent}_\mu(g) = \int g\ln\left(\frac{g}{\|g\|_1}\right)\,d\mu \]
where $\|g\|_1 = \|g\|_{L^1(\mu)}$.  (When $\|g\|_1=1$, so $g$ is a probability density, this gives the classical Shannon entropy.)   The {\bf logarithmic Sobolev inequality} is an energy-entropy functional inequality: a measure $\mu$ on $\R^n$ (or more generally on a Riemannian manifold) satisfies a log Sobolev inequality if, for some constant $c>0$ and for all sufficiently smooth positive test functions $f$,
\begin{equation} \label{LSI} \tag{LSI} \mathrm{Ent}_\mu(f^2) \le c \int |\nabla f|^2\,d\mu. \end{equation}
Making the substitution $g=f^2$ gives the equivalent form $\mathrm{Ent}_\mu(g)\le \frac{c}{4}\int |\nabla g|^2/g\,d\mu$, the integral on the right defining the {\em Fisher information} of $g$ relative to $\mu$.  In this form, the inequality was first discovered for the standard normal law $\mu$ on $\R$ by Stam in \cite{Stam}.  It was rediscovered and named by Gross in \cite{g4}, proved for standard Gaussian measures on $\R^n$ with sharp constant $c=2$.  Over the past four decades, it has become  an enormously powerful tool making fundamental contributions to geometry and global analysis \cite{Bakry 1,Bakry 2, Bakry Emery,Bobkov Houdre,Bobkov Ledoux,Davies 1,Davies 2,Davies Simon,Ledoux 0,Ledoux 2,Ledoux 5}, statistical physics \cite{Holley Stroock,Yau 1,Yau 2,Zegarlinski}, mixing times of Markov chains \cite{Bobkov Tetali,Diaconis Saloff-Coste,Guionnet Zegarlinski}, concentration of measure and optimal transport \cite{Ledoux 1,Ledoux 3,Villani}, random matrix theory \cite{AGZ,Ledoux 4,Zimmermann}, and many others.

\medskip

Gross discovered the log Sobolev inequality through his work in constructive quantum field theory, particularly relating to Nelson's hypercontractivity estimates \cite{n}.  In fact, Gross showed in \cite{g4} that the log Sobolev inequality (\ref{LSI}) is equivalent to hypercontractivity.  Later, in \cite{j,j2}, Janson discovered a stronger form of hypercontractivity that holds for {\em holomorphic} test functions.

\begin{theorem}[Janson \cite{j}] \label{thm Janson} If $\mu$ is the standard Gaussian measure on $\C^n$, and $0<p\le q<\infty$, then for all holomorphic functions $f\in L^p(\C^n,\mu)$, $\|f(e^{-t}\,\cdot\,)\|_q \le \|f\|_p$ for $t \ge \frac12\ln\frac{q}{p}$; for $t < \frac12\ln\frac{q}{p}$, the dilated function $f(e^{-t}\,\cdot\,)$ is not in $L^q(\C^n,\mu)$.
\end{theorem}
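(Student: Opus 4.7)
My plan is to follow Janson's original strategy: reduce to the one-dimensional case by tensorization, verify the inequality on monomials by direct computation, extend to arbitrary holomorphic $f \in L^p$ by a Gross-type differential argument, and prove the sharpness part by exhibiting explicit counterexamples.

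The standard Gaussian on $\C^n$ is a product measure, and the dilation $f\mapsto f(e^{-t}\cdot)$ acts diagonally on each coordinate, so Fubini combined with Minkowski's integral inequality (applicable since $q\ge p$) reduces everything to $n=1$. In one complex dimension, for a monomial $f(z) = z^k$, polar coordinates give
\[
\|z^k\|_p^p \;=\; \int_0^\infty r^{kp}\,e^{-r^2}\,2r\,dr \;=\; \Gamma\!\left(\tfrac{kp}{2}+1\right),
\]
so at the critical time $t = \tfrac12 \ln(q/p)$ the required inequality $\|z^k(e^{-t}\cdot)\|_q \le \|z^k\|_p$ becomes equivalent (after setting $s = kp/2$ and $\lambda = q/p$) to $\Gamma(s\lambda+1) \le \lambda^{s\lambda}\Gamma(s+1)^\lambda$, which follows from the log-convexity of $\log\Gamma$.

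The hard part is extending from monomials to arbitrary holomorphic $f = \sum_k c_k z^k$, because for $p \neq 2$ the $L^p$ norm is not a simple function of the Taylor coefficients and one cannot just sum the monomial estimates. I would handle this by adapting Gross's semigroup method: set $\phi(t) = \log \|f(e^{-t}\cdot)\|_{q(t)}$ for a smooth increasing clock $q(t)$ with $q(0)=p$, and compute $\phi'(t)$. Using the Euler identity $zf'(z) = Nf(z)$ for holomorphic $f$ (where $N$ is the number operator acting on the Taylor expansion) together with complex Gaussian integration by parts, $\phi'(t)$ can be recast as an energy--entropy expression in $|f(e^{-t}\cdot)|^{q(t)}$ alone. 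Choosing the clock appropriately and applying the Gaussian log Sobolev inequality restricted to the log-subharmonic cone---precisely the ``strong'' inequality this paper systematically investigates---then yields $\phi'(t) \le 0$ on the allowed range, and integration gives the contractive estimate.

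For the sharpness statement I would exhibit the one-parameter family $f_a(z) = \exp(az^2/2)$ with $0 < a < 1/p$. A direct Gaussian computation shows $\|f_a\|_p < \infty$ iff $a < 1/p$, while $f_a(e^{-t}z) = \exp(a e^{-2t} z^2/2) \in L^q$ iff $a e^{-2t} < 1/q$, i.e.\ $e^{2t} > aq$. Letting $a \nearrow 1/p$ covers every $t < \tfrac12 \ln(q/p)$, producing $L^p$ holomorphic functions whose dilates exit $L^q$ at precisely the advertised threshold. The main obstacle, as already emphasized in the introduction, is the passage from monomial bounds to the full holomorphic $L^p$ statement; the efficient approximation of (log)subharmonic functions needed to close that gap is exactly the technical issue isolated by Gross--Grothaus and resolved in the present paper.
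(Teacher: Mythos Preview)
The paper does not give its own proof of this statement: Theorem~\ref{thm Janson} is quoted in the introduction as Janson's result from \cite{j}, with no argument supplied, purely as motivation and background for the strong hypercontractivity property (\ref{sHC}) that the paper goes on to study. So there is no ``paper's proof'' to compare your proposal against.

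That said, two remarks on your sketch as a standalone argument. First, the tensorization step is not as innocent as you suggest: a holomorphic function on $\C^n$ is not a product of one-variable holomorphic functions, so ``Fubini combined with Minkowski'' does not by itself reduce the $L^p\to L^q$ estimate to $n=1$. Janson's actual argument works directly with the multi-index monomial basis. Second, and more seriously in the present context, your plan to pass from monomials to general $f$ by invoking ``the Gaussian log Sobolev inequality restricted to the log-subharmonic cone---precisely the `strong' inequality this paper systematically investigates'' is circular here. The paper's program is exactly to establish the equivalence of (\ref{sLSI}) and (\ref{sHC}); taking (\ref{sLSI}) as an input to derive the Gaussian instance of (\ref{sHC}) (which is Janson's theorem) presupposes what the paper is building. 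Janson's own proof does not go through a log Sobolev inequality at all; it is a direct computation exploiting the orthogonality of monomials and convexity of the Gamma function, and that is the self-contained route you should follow if you want an independent proof. Your sharpness family $f_a(z)=\exp(az^2/2)$ is the right idea, though you should double-check the threshold against whatever normalization of the Gaussian you fix.
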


\begin{remark} Nelson's hypercontractivity estimates \cite{n} involve the semigroup $e^{-tA_\mu}$, where $A_\mu$ is the Dirichlet form operator for the measure $\mu$: $\int |\nabla f|^2\,d\mu = \int \overline{f}A_\mu f\,d\mu$. If $d\mu = \rho\,dx$ has a smooth density $\rho$, integration by parts shows that $A_\mu = -\Delta - (\nabla\rho/\rho)\cdot\nabla$, and so when applied to holomorphic (hence harmonic) functions, $e^{-tA_\mu}$ is the flow of the vector field $\nabla\rho/\rho$.  For the standard Gaussian measure, this is just the coordinate vector field $x$, the infinitesimal generator of dilations $Ef(x) = x\cdot\nabla f(x)$, also known as the {\em Euler operator}.  The perspective of this paper, like
its
predecessor \cite{GKL}, is that the strong hypercontractivity theorem is essentially about the dilation semigroup $f\mapsto f(e^{-t}\,\cdot\,)$, independent of the underlying measure.
\end{remark}

Janson's strong hypercontractivity
differs
from Nelson's hypercontractivity in two important ways: first, the time-to-contraction is smaller, $\frac12\ln\frac{q}{p}$ as opposed to the larger Nelson time $\frac12\ln\frac{q-1}{p-1}$, and second, the theorem applies even in the regime $0<p,q<1$ where the $L^p$ ``norms'' are badly-behaved.  Nevertheless, in \cite{g}, Gross showed that Janson's theorem is also a consequence of the same log Sobolev inequality (\ref{LSI}); moreover, he generalized this implication considerably to complex manifolds (equipped with sufficiently nice measures).  The reverse implication, however, was not established: the proof requires (\ref{LSI}) to hold for non-holomorphic functions (in particular of the form $|f|^{p/2}$).  We refer the reader to \cite{GKL} for an extensive list of recent literature on strong hypercontractivity in the holomorphic category, and related ideas (notably reverse hypercontractivity) in the subharmonic category.

\medskip

The aim of the present paper is to prove an {\em intrinsic} equivalence of strong hypercontractivity and a log Sobolev inequality.  The starting point is a generalization of Theorem \ref{thm Janson} beyond the holomorphic category.  A function on $\R^n$ is {\em log-subharmonic} ($\mathrm{LSH}$ for short) if $\ln |f|$ is subharmonic; holomorphic functions are prime examples.  In \cite{GKL}, we proved that Theorem \ref{thm Janson} holds in the larger class $\mathrm{LSH}$, for the Gaussian measure and several others.  We also established a weak connection to a {\bf strong log Sobolev inequality}.

\begin{definition} \label{def sLSI} A measure $\mu$ on $\R^n$ satisfies a {\bf strong logarithmic Sobolev inequality} if there is a constant $c>0$ so that, for non-negative $g\in\LSH$ sufficiently smooth and integrable,
\begin{equation} \label{sLSI} \tag{sLSI} \mathrm{Ent}_\mu(g) \le \frac{c}{2} \int Eg\,d\mu.
\end{equation}
\end{definition}
Inequality (\ref{sLSI}) could be written equivalently in the form $\mathrm{Ent}_\mu(f^2) \le c \int f Ef\,d\mu$; we will use it in $L^1$-form throughout.  In \cite{GKL}, we showed the strong log Sobolev inequality holds for the standard Gaussian measure on $\R^n$, with constant $c=1$ (half the constant from (\ref{LSI})), and conjectured that (\ref{sLSI}) is equivalent in greater generality to the following form of Janson's strong hypercontractivity.

\begin{definition} \label{def sHC} A measure $\mu$ on $\R^n$ satisfies the property of {\bf strong hypercontractivity} if there is a constant $c>0$ so that, for $0<p\le q<\infty$ and for every $f\in L^p(\mu)\cap\LSH$, we have
\begin{equation} \label{sHC} \tag{sHC} \|f(r\,\cdot\,)\|_{L^q(\mu)} \le \|f\|_{L^p(\mu)} \quad \text{if} \quad 0<r\le (p/q)^{c/2}.
\end{equation}
\end{definition}

\begin{remark} The statement in Definition \ref{def sHC} is given in multiplicative notation rather than additive, with $r=e^{-t}$ scaling the variable.  It would appear more convenient to use the constant $c$ instead of $\frac{c}{2}$ in (\ref{sLSI}) and (\ref{sHC}).  We choose to normalize with $\frac{c}{2}$ for historical reasons: Gross's equivalence of the log Sobolev inequality and Nelson's hypercontractivity equates $c$ in (\ref{LSI}) to $\frac{c}{2}$ scaling the time to contraction.
\end{remark}

\begin{notation} For a function $f$ on $\R^n$ and $r\in[0,1]$, $f_r$ denotes the function $f_r(x) = f(rx)$.  \end{notation}

\bigskip

\subsection{Main Results}

%

In \cite{GKL}, we showed that (\ref{sHC}) implies (\ref{sLSI}) in the special case that the measure $\mu$ is compactly supported.  Our first result is the converse.

\begin{theorem} \label{t.comp.supp} Let $\mu$ be a compactly supported measure on $\R^n$.  Suppose that $\mu$ satisfies (\ref{sLSI}) for all sufficiently smooth functions $g\in\LSH(\R^n)$.  Then $\mu$ satisfies (\ref{sHC}) for {\em all} functions $f\in\LSH(\R^n)$.
\end{theorem}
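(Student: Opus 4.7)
The plan is to adapt Gross's classical argument deriving hypercontractivity from a log Sobolev inequality to the \LSH/dilation setting. Fix $0<p<\infty$, and parametrise the boundary of the (sHC) region by $q(t)=p\,e^{2t/c}$ and $r(t)=e^{-t}$ for $t\ge 0$, so that $r(t)=(p/q(t))^{c/2}$. For a smooth, strictly positive $h\in\LSH(\RR^n)$, set
\[
\alpha(t):=\|h_{r(t)}\|_{L^{q(t)}(\mu)}=\left(\int h(r(t)x)^{q(t)}\,d\mu(x)\right)^{1/q(t)}, \qquad F(t):=\int h(r(t)x)^{q(t)}\,d\mu(x).
\]
The goal is to prove $\frac{d}{dt}\ln\alpha(t)\le 0$; since $\mu$ is a probability measure, $L^q(\mu)$-norms are non-decreasing in $q$, so the resulting inequality $\|h_{r(t)}\|_{q(t)}\le\|h\|_p$ covers the full (sHC) range $q\le p\,r^{-2/c}$ for such smooth $h$, after which an approximation handles the general case.

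The differentiation itself is routine. Writing $h_t=h_{r(t)}$ and using $\frac{d}{dt}h_t=-E(h_t)$ together with $E(h_t^{q(t)})=q(t)\,h_t^{q(t)-1}\,Eh_t$, a short calculation gives
\[
q(t)\,F(t)\,\frac{d}{dt}\ln\alpha(t)=\frac{q'(t)}{q(t)}\,\mathrm{Ent}_\mu\bigl(h_t^{q(t)}\bigr)-\int E\bigl(h_t^{q(t)}\bigr)\,d\mu.
\]
With $q'(t)/q(t)=2/c$, the inequality $\frac{d}{dt}\ln\alpha(t)\le 0$ is exactly (sLSI) applied to $g=h_t^{q(t)}$. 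This $g$ is a valid test function, since positive powers and dilations preserve $\LSH$; and compact support of $\mu$, combined with smoothness and strict positivity of $h$, makes $g$, $Eg$, and $g\ln g$ all bounded on $\supp\mu$, so every integral and every differentiation under the integral is justified.

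The main obstacle is reducing an arbitrary $f\in\LSH(\RR^n)$ to a smooth, strictly positive $h$. Direct mollification $f*\phi_\e$ preserves subharmonicity but not log-subharmonicity, so I instead mollify through the logarithm. First replace $f$ by $|f|+\delta$ for $\delta>0$, which is still $\LSH$ (by the convex-increasing composition rule applied to $\ln|f|$) and bounded below by $\delta$; then set
\[
f^{(\e,\delta)}(x):=\exp\Bigl(\bigl(\ln(|f|+\delta)\bigr)*\phi_\e\,(x)\Bigr),
\]
where $\phi_\e$ is a standard radial mollifier. Because $\ln(|f|+\delta)\ge\ln\delta$ is subharmonic and locally bounded, $(\ln(|f|+\delta))*\phi_\e$ is smooth and subharmonic and decreases pointwise to $\ln(|f|+\delta)$ as $\e\downarrow 0$; hence $f^{(\e,\delta)}$ is smooth, bounded below by $\delta$, lies in $\LSH$, and decreases pointwise to $|f|+\delta$. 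Jensen's inequality gives $f^{(\e,\delta)}\le(|f|+\delta)*\phi_\e$, which is bounded on $\supp\mu$ by compactness of the support, so dominated convergence transfers the inequality proved for $f^{(\e,\delta)}$ first to $|f|+\delta$ as $\e\downarrow 0$, and then to $|f|$ (and hence to $f$, since (sHC) depends only on $|f|$) as $\delta\downarrow 0$. Compactness of $\supp\mu$ is the hypothesis that makes every integral finite and every passage to the limit quantitative.
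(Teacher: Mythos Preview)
Your differentiation argument is essentially identical to the paper's: both compute the derivative of $\|h_r\|_{q(r)}$ (you in the additive parametrisation $r=e^{-t}$, the paper directly in $r$), identify the result with $-\mathrm{Ent}_\mu(h_r^{q(r)}) + \frac{c}{2}\int E(h_r^{q(r)})\,d\mu$, and invoke (\ref{sLSI}) for the $\LSH$ function $h_r^{q(r)}$. Compact support makes the dominated convergence step trivial in both treatments. The paper additionally uses Proposition~\ref{prop SHC increasing} to reduce to $p=1$, whereas you work at general $p$ and then use monotonicity of $L^q$-norms on a probability space; both are fine.

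The real difference is in the approximation step, and here your argument rests on a misconception. You write that direct mollification $f\ast\phi_\e$ ``preserves subharmonicity but not log-subharmonicity,'' and this is false: Lemma~\ref{lemma convolution preserves LSH} of the paper shows precisely that if $f\in\LSH$ and $\varphi\ge 0$ is $C_c^\infty$, then $f\ast\varphi\in\LSH\cap C^\infty$. The paper's proof therefore approximates a general $f\in\LSH$ simply by $f\ast\varphi_k$ and passes to the limit using that $f_r$, being $\LSH$ hence upper semicontinuous, is bounded on the compact support of $\mu$. Your logarithmic mollification $f^{(\e,\delta)}=\exp\bigl((\ln(f+\delta))\ast\phi_\e\bigr)$ is correct --- convexity of $u\mapsto\ln(e^u+\delta)$ gives $f+\delta\in\LSH$, subharmonic mollification gives the smooth $\LSH$ approximant, and the monotone pointwise convergence together with local boundedness of $f$ on the compact set $\bigcup_{r\le 1} r\cdot\supp\mu$ justifies the limits --- but it is an unnecessary detour. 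The direct convolution is both simpler and, as the rest of the paper shows, the tool that generalises beyond compact support.
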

\begin{remark} \label{r.comp.supp} We emphasize here that the domains in the equivalence consist of log-subharmonic functions a priori defined on all of $\R^n$, not just on the support of $\mu$.  Indeed, the dilation semigroup is not well-defined if this is not satisfied.  In fact, it is not hard to see that this result extends to log-subharmonic functions defined on any
star-shaped
open region containing the support of $\mu$.
\end{remark}

Theorem \ref{t.comp.supp} and its converse have non-trivial applications: for example, Proposition \cite[Proposition 4.2]{GKL} implies that (\ref{sLSI}) holds true for any compactly supported symmetric measure on $\R$, with constant $c\le 2$.  Nevertheless, it excludes the standard players in log Sobolev inequalities, most notably Gaussian measures.  In \cite[Theorem 5.8]{GKL}, we proved directly that (\ref{sLSI}) holds true for the standard Gaussian measure on $\R^n$, with best constant $c=1$.  This was proved directly from (\ref{LSI}), and relied heavily on the precise form of the Gaussian measure; a direct connection to strong hypercontractivity (also proved for the Gaussian measure in \cite[Theorem 3.2]{GKL}) was not provided.  That connection, for a wide class of measures, is the present goal.

\medskip

The technicalities involved in establishing the equivalence of (\ref{sLSI}) and (\ref{sHC}) are challenging because of the rigidity of the class $\mathrm{LSH}$: standard cut-off approximations needed to use integrability arguments in the proof are unavailable for subharmonic functions.  To amend this, we use a fundamentally different approximation technique: the {\bf dilated convolution} introduced in \cite{GGS,Gross Grothaus}, and developed in Section \ref{sect continuity dilated convolution} below.  In \cite{GGS}, the authors provided a local condition on the density of $\mu$ under which this operation is bounded on $L^p(\mu)$ (amounting to a bound on the Jacobian derivative of the translation and dilation).  Here we present alternative conditions, which require little in terms of the local behaviour of the measure (they are essentially growth conditions near infinity) and achieve the same effect.

\begin{definition} \label{def Euclidean regular} Let $p>0$ and let $\mu$ be a positive measure on $\R^n$ with density $\rho$.  Say that $\mu$ (or $\rho$) is {\bf Euclidean exponential type} $p$ if $\rho(x)>0$ for all $x$ and if the following two conditions hold:
\begin{align}
\sup_x \sup_{|y|\le s}|x|^p \frac{\rho(ax+y)}{\rho(x)}&<\infty \quad \text{for} \  \text{any} \quad a>1, \; s\ge 0 \label{E.R.1} \\
\sup_x \sup_{1< a< 1+\e} \frac{\rho(ax)}{\rho(x)} &<\infty \quad \text{for} \ \text{some} \quad \e>0. \label{E.R.2}
\end{align}
If $\mu$ is Euclidean exponential type $0$, we say it is {\bf Euclidean regular}.
\end{definition}

The  terminology derives from the fact that conditions (\ref{E.R.1}) and (\ref{E.R.2}) insist that the Euclidean group acts on $\rho$ in a controlled manner; exponential type refers to the growth condition involving $|x|^p$ (indeed, for $p>0$ the measure must have tails that decay faster than any polynomial to be Euclidean exponential type $p$).  For any probability measure $\mu$ with strictly positive density $\rho$, denote for $a\ge 1$ and $p,s\ge0$
\begin{equation} \label{eq E.R. constant} C^p_\mu(a,s) = C^p_\rho(a,s) \equiv \sup_x \sup_{|y|\le s} |x|^p\frac{\rho(ax+y)}{\rho(x)}. \end{equation}
Then the condition that $\mu$ is Euclidean exponential type $p$ is precisely that $C^p_\mu(a,s)<\infty$ for each $a>1$ and $s\ge 0$, and $C^0_\mu(a,0)$ is uniformly bounded for $a$ close to $1$.  It is clear from the definition that $C^p_\mu(a,s)$ is an increasing function of $s$.  Moreover, if $\mu$ is Euclidean  exponential type $q$ then it is Euclidean  exponential type $p$ for any $p<q$.  For convenience, we will often write $C_\mu$ for $C_\mu^0$.

\begin{example} \label{examples E.R.}
On $\R$, the densities $(1+x^2)^{-\alpha}$ for $\alpha>\frac12$ are Euclidean regular.
 On $\R^n$ the densities  $e^{-c|x|^a}$ with $a,c>0$ are Euclidean exponential type $p$ for all $p>0$.
\end{example}
\noindent More examples and properties that prove the Euclidean regular measures form a rich class are given in Section \ref{sect properties E.R.}.  The purpose of introducing this class at present is its utility in proving a density theorem for an appropriate class of Sobolev-type spaces.

\begin{definition} \label{def LpE} Let $\mu$ be a measure on $\R^n$, and let $p>0$.  Define the Sobolev space $L^p_E(\mu)$ to consist of those weakly-differentiable functions $f\in L^p(\mu)$ for which $Ef\in L^p(\mu)$.  To be clear: $Ef(x) = \sum_{j=1}^n x_j u_j(x)$, where $u_j$ is the function (posited to exist) satisfying
\[ -\int \del_j \varphi\, f\,dx = \int \varphi\, u_j\,dx \]
for any $\varphi\in C_c^\infty(\R^n)$, where $dx$ denotes Lebesgue measure.
\end{definition}

Standard techniques, involving approximation by $C_c^\infty$ functions, show that $L^p_E$ is dense in $L^p$ for reasonable measures.  However, our goals here involve approximation of log-subharmonic functions, and the usual cut-off approximations fail to preserve subharmonicity.  An alternative approach is to use a convolution approximate identity procedure, as is readily available for Lebesgue measure.  The problem is that, for a given bump function $\varphi$, the operation $f\mapsto f\ast\varphi$ is typically unbounded on $L^p(\mu)$ when $\mu$ is not Lebesgue measure.  Indeed, for $L^p$ of Gaussian measure, even the translation $f\mapsto f(\;\cdot\,+y)$ is unbounded.  The problem is that the convolution can shift mass in from near infinity.  One might hope to dilate this extra mass back out near infinity, to preserve $p$-integrability; thus the dilated convolution $f\mapsto (f\ast\varphi)_r$.  Section \ref{sect continuity dilated convolution} shows that this operation behaves well in $L^p$ spaces of 
Euclidean regular measures; it also preserves the cone $\mathrm{LSH}$.

\medskip

The main technical theorem of this paper is the following Sobolev density theorem.

\begin{theorem} \label{thm Sobolev density} Let $p\in[0,\infty)$, and let $\mu$ be a Euclidean exponential type $p$ probability measure on $\R^n$.  Then the cone $\LSH\cap L^p_E(\mu)$ is dense in the cone $\LSH\cap L^p(\mu)$. \end{theorem}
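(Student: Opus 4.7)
The plan is to approximate any $f \in \LSH \cap L^p(\mu)$ by the two-parameter family of dilated mollifications $f^{(r,\e)}(x) = (f \ast \varphi_\e)(rx)$, where $\varphi_\e(y) = \e^{-n}\varphi(y/\e)$ is a standard non-negative $C_c^\infty$ approximate identity with $\supp \varphi \subset \{|y|\le 1\}$ and $r \in (0,1)$ is close to $1$. I will then establish two things: (a) each $f^{(r,\e)}$ lies in $\LSH \cap L^p_E(\mu)$, and (b) for a suitable diagonal limit $\e_n \to 0$, $r_n \to 1^-$, one has $f^{(r_n,\e_n)} \to f$ in $L^p(\mu)$.

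For (a), preservation of the cone is structural: the dilation $g \mapsto g_r$ obviously preserves $\LSH$, and convolution of an $\LSH$ function with a non-negative kernel stays in $\LSH$ because $\LSH$ is closed under translations, positive combinations, and monotone limits (facts already used in \cite{GKL}), so $f \ast \varphi_\e$ is an $\LSH$-limit of Riemann-sum combinations of translates of $f$. Smoothness of $\varphi_\e$ gives $f^{(r,\e)} \in C^\infty$, and a direct computation yields $E f^{(r,\e)}(x) = r \sum_j x_j (f \ast \partial_j \varphi_\e)(rx)$. To estimate its $L^p(\mu)$-norm I would apply a Jensen-type bound inside the convolution (for $p \ge 1$; the case $0 < p < 1$ is handled via the $\LSH$-specific estimates of Section \ref{sect continuity dilated convolution}) and then change variables $u = rx - y$, $x = au + y/r$ with $a = 1/r$. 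The extra factor $|x|^p \le C(|u|^p + 1)$ on $\supp \varphi_\e$ splits the resulting integral into two pieces controlled respectively by $C_\mu^p(a, |y|/r)$ and $C_\mu^0(a, |y|/r)$, both finite by the Euclidean exponential type $p$ hypothesis. The same change of variables without the $|x|^p$ factor shows that the dilated convolution itself is bounded on $L^p(\mu)$ uniformly in small $\e$ and $r$ close to $1$, using (\ref{E.R.1}) at exponent $0$ together with (\ref{E.R.2}).

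For (b), I would telescope $f^{(r,\e)} - f = ((f \ast \varphi_\e) - f)_r + (f_r - f)$. With the uniform $L^p(\mu)$-boundedness from (a) and density of $C_c^\infty(\R^n)$ in $L^p(\mu)$, it suffices to check convergence of each term on continuous compactly supported test functions, where uniform convergence on compacta, combined with a dominating function supplied by the boundedness bound applied to $|f|$, yields $L^p(\mu)$-convergence. A diagonal extraction then delivers the desired sequence $f^{(r_n,\e_n)} \in \LSH \cap L^p_E(\mu)$ converging to $f$ in $L^p(\mu)$.

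The main obstacle, noticed already by Gross and Grothaus \cite{Gross Grothaus}, is that the rigidity of $\LSH$ forbids the usual cut-off-and-mollify construction while bare translation $f \mapsto f(\,\cdot\, - y)$ is typically unbounded on $L^p(\mu)$ (famously so for Gaussian weights), so one cannot separate the mollification from the dilation. The Euclidean exponential type $p$ condition is precisely designed so that (\ref{E.R.1}) at exponent $p$ absorbs the $|x|^p$ growth produced by the Euler operator into the translation-dilation weight ratio after the change of variable; this is the crux of the whole argument, and controlling this single quantity simultaneously with the plain $L^p(\mu)$ bound is where the real work lies.
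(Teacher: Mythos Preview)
Your proposal is correct and follows essentially the same route as the paper's proof: approximate $f$ by the dilated mollification $(f\ast\varphi_\e)_r$, verify membership in $\LSH\cap L^p_E(\mu)$ via the Euclidean exponential type $p$ bound absorbing the $|x|^p$ weight after the change of variables $u=rx-y$, and prove $L^p(\mu)$-convergence via the two-step (mollification, then dilation) limit using uniform boundedness and a $C_c$-approximation. The only cosmetic difference is that the paper splits $rx=(rx-y)+y$ \emph{before} H\"older so that one of the two terms becomes $(f\ast E\varphi)_r$ and is handled by Euclidean regularity alone, whereas you split $|x|^p\le C(|u|^p+1)$ \emph{after} the change of variables; both decompositions land on the same use of $C^p_\mu$ and $C^0_\mu$.
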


\noindent Using Theorem \ref{thm Sobolev density}, we will prove the equivalence of (\ref{sLSI}) and (\ref{sHC}), the latter in a nominally weaker form.

\begin{definition} \label{def Lp closure} Let $\mu$ be a probability measure on $\R^n$, and let $0<p<q<\infty$.  Denote by $\mathrm{LSH}^{p<q}(\mu)$ the closure of $L^q(\mu)\cap\mathrm{LSH}$ in $L^p(\mu)\cap\mathrm{LSH}$. \end{definition}

\noindent For any probability measure, there is a common dense subspace ($L^\infty$) for all the full $L^q$-spaces, $q>0$, and so the closure of $L^q$ in $L^p$ is all of $L^p$ for $p<q$; the proof uses cut-offs that do not respect subharmonicity, and indeed, there are no non-constant bounded subharmonic functions.  In \cite{g}, Gross showed that, under certain conditions on a measure $\mu$ on a complex manifold (in terms of its Dirichlet form operator $d^\ast d$), in the presence of a full log Sobolev inequality (\ref{LSI}), there is a common dense subspace for all {\em holomorphic} $L^q$ spaces of $\mu$.  In the present context of logarithmically-subharmonic functions, no such technology is known, and we will content ourselves with the spaces $\mathrm{LSH}^{p<q}$.  We will consider the nature of the spaces in a future publication.

%
%

\medskip

This brings us to our main theorem: the equivalence of (\ref{sLSI}) and (\ref{sHC}) for logarithmically subharmonic functions.

\begin{theorem} \label{thm sLSI <=> sHC} Let $\mu$ be an $O(n)$-invariant probability measure on $\R^n$.
\begin{itemize}
\item[(1)] If $\mu$ is Euclidean exponential type $p$ for {\em all} $p>0$, and if $\mu$ satisfies the strong log Sobolev inequality (\ref{sLSI}), then $\mu$ satisfies strong hypercontractivity (\ref{sHC}) in the spaces $\mathrm{LSH}^{p<q}(\mu)$: for $0<p\le q<\infty$ and $f\in \mathrm{LSH}^{p<q}$, $\|f_r\|_q \le \|f\|_p$\; for $0<r\le (p/q)^{c/2}$.
\item[(2)] If $\mu$ is Euclidean exponential type $p$ for {\em some} $p>1$, and if $\mu$ satisfies (\ref{sHC}) in the above sense, then $\mu$ satisfies the strong log Sobolev inequality (\ref{sLSI}):
\[ \mathrm{Ent}_\mu(g)\le \frac{c}{2}\int Eg\,d\mu \]
for all $g\in\mathrm{LSH}\cap L^1_E(\mu)\cap C^1(\R^n)$.
\end{itemize}
\end{theorem}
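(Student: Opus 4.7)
The plan is to carry out Gross's classical derivative-of-norms argument in both directions, with the essential technical ingredient being the approximation machinery of Section \ref{sect continuity dilated convolution} together with Theorem \ref{thm Sobolev density}, which together replace the cut-off approximations that are unavailable in the $\LSH$ cone.

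For direction (1), fix $0<p<q<\infty$ and set $r_0=(p/q)^{c/2}$. Introduce the interpolating exponent $q(r)=p\,r^{-2/c}$, so that $q(1)=p$, $q(r_0)=q$, and $r\,q'(r)/q(r)=-2/c$. For $f\in\mathrm{LSH}^{p<q}(\mu)$ define
\[
\Phi(r)=\frac{1}{q(r)}\ln\int f(rx)^{q(r)}\,d\mu(x),\qquad g_r(x):=f(rx)^{q(r)};
\]
since $\LSH$ is closed under dilation and positive powers, $g_r\in\LSH$. Assuming first that $f$ is smooth enough that $g_r\in L^1_E(\mu)$ for each $r\in[r_0,1]$, a short calculation using $\partial_r f(rx)=r^{-1}(Ef)(rx)$ and $\int g_r\ln g_r\,d\mu=\mathrm{Ent}_\mu(g_r)+\|g_r\|_1\ln\|g_r\|_1$ produces the identity
\[
r\,q(r)\,\|g_r\|_1\,\Phi'(r)=\frac{r\,q'(r)}{q(r)}\,\mathrm{Ent}_\mu(g_r)+\int Eg_r\,d\mu=-\frac{2}{c}\,\mathrm{Ent}_\mu(g_r)+\int Eg_r\,d\mu.
\]
Hence (sLSI) applied to $g_r$ makes $\Phi'(r)\ge 0$ on $(r_0,1]$, and $\Phi(r_0)\le\Phi(1)$ is exactly (sHC). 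To remove the extra smoothness on $f$, I would proceed in two stages: by the definition of $\mathrm{LSH}^{p<q}(\mu)$, approximate first by $\LSH\cap L^q(\mu)$ functions in $L^p$-norm, then by smooth $\LSH\cap L^q_E(\mu)$ approximants produced by Theorem \ref{thm Sobolev density} via dilated convolution; continuity of dilation on $L^p(\mu)$ and Fatou's lemma transport the inequality to the limit.

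For direction (2), I would invert this computation. Given $g\in\LSH\cap L^1_E(\mu)\cap C^1(\R^n)$ and $p>1$ from the Euclidean exponential type hypothesis, let $f=g^{1/p}\in\LSH\cap L^p(\mu)$. A dilated-convolution approximation of $f$ produces bounded (hence $L^q(\mu)$ for every $q$) LSH functions converging to $f$ in $L^p(\mu)$, so $f\in\mathrm{LSH}^{p<q}(\mu)$ for every $q>p$. Applying (sHC) with $r(q)=(p/q)^{c/2}$ and raising to the $p$-th power gives
\[
H(s):=\int g\!\left(s^{-c/2}x\right)^{s}d\mu(x)\le\left(\int g\,d\mu\right)^{s},\qquad s:=q/p\ge1.
\]
Both sides equal $\|g\|_1$ at $s=1$, so $H'(1^+)\le\|g\|_1\ln\|g\|_1$. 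Computing $H'(1)$ directly using $r(1)=1$, $r'(1)=-c/2$, and $\partial_r g(rx)=r^{-1}(Eg)(rx)$ produces $H'(1)=\int g\ln g\,d\mu-\tfrac{c}{2}\int Eg\,d\mu$, and rearranging yields the desired (sLSI). The hypotheses $g\in C^1\cap L^1_E$ are precisely what allow the differentiation under the integral at $s=1$.

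The principal obstacle is the approximation step: cut-offs destroy subharmonicity, so every smoothing must preserve the $\LSH$ cone while remaining continuous in $L^p(\mu)$. The dilated convolutions of Section \ref{sect continuity dilated convolution} achieve both, but only under the Euclidean exponential type hypotheses, which control how the dilation action deforms the density of $\mu$. The precise type assumptions---``type $p$ for all $p$'' in (1) and ``type $p$ for some $p>1$'' in (2)---correspond exactly to the range of $L^q$-norms that the approximating sequences must dominate in each direction.
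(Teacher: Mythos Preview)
Your overall architecture---differentiate the norm in $r$ and identify the sign of the derivative with (\ref{sLSI})---matches the paper's. The genuine gap is in the step you pass over with ``the hypotheses $g\in C^1\cap L^1_E$ are precisely what allow the differentiation under the integral at $s=1$.'' They do not. To differentiate $\int g(rx)^{q(r)}\,d\mu(x)$ (equivalently your $H(s)$) you need an $L^1(\mu)$ dominator, \emph{uniform in $r$ near $1$}, for the two terms $f_1(r,x)=g(rx)^{q(r)}\log g(rx)$ and $f_2(r,x)=g(rx)^{q(r)-1}(Eg)(rx)$. Knowing $g,\,Eg\in L^1(\mu)$ controls these only at $r=1$; for $r<1$ a subharmonic $g$ can satisfy $g(rx)\gg g(x)$ pointwise, so no pointwise comparison is available. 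This is exactly where the $O(n)$-invariance hypothesis enters in the paper: one averages over spheres (Notation \ref{def average}), uses that the \emph{spherical average} of a subharmonic function is radially non-decreasing (Proposition \ref{prop sh rotation}), and thereby manufactures uniform dominators $h_1,h_2\in L^1(\mu)$ for $\tilde f_1,\tilde f_2$ (Propositions \ref{prop bound f1} and \ref{prop bound f2}, with Lemma \ref{lemma bound Ek} handling the $Eg$-term). Only then does Lemma \ref{lemma derivative rotation} deliver the derivative formula you write down. Your proposal never invokes rotational invariance, so the differentiation-under-the-integral step is unjustified in both directions; in direction (1) the same issue arises when you assert $\Phi'(r)\ge 0$ for all $r\in[r_0,1]$.

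A smaller but separate problem in your direction (2): dilated convolutions $(f\ast\varphi)_r$ are $C^\infty\cap\LSH$ and lie in $L^p_E(\mu)$ (Theorem \ref{thm Sobolev density}), but they are not bounded in general---$f\ast\varphi$ inherits no global bound from $f\in L^p(\mu)$---so your route to $f\in\mathrm{LSH}^{p<q}$ via ``bounded, hence in every $L^q$'' does not work as stated. The paper instead proves (\ref{sLSI}) directly for $f\in L^{q_0}_E(\mu)\cap\LSH\cap C^\infty$ with $q_0>1$ (Theorem \ref{theorem ==> LSI}) using the rotational-average dominators above; no passage through bounded approximants is needed.
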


\begin{remark} \label{rk O(n)-invariant} The global assumption of rotational-invariance in Theorem \ref{thm sLSI <=> sHC} is actually quite natural in this situation.   The functional $g\mapsto \int Eg\,d\mu$ on the right-hand-side of our strong log Sobolev inequality is not generally positive, since the operator $E$ is not generally self-adjoint in $L^2(\mu)$; however, when $\mu$ is rotationally-invariant, this functional is positive on the cone $\LSH$, as pointed out in \cite[Proposition 5.1]{GKL}.
\end{remark}

\noindent We emphasize that Theorem \ref{thm sLSI <=> sHC} is {\em intrinsic}.  While the two directions of the theorem require slightly different assumptions on the applicable measures, the implications between (\ref{sLSI}) and (\ref{sHC}) both stay within the cone $\mathrm{LSH}$ of log-subharmonic functions.  This is the main benefit of extending Janson's strong hypercontractivity theorem from holomorphic functions to this larger class, and restricting the log-Sobolev inequality to it: here, the two are precisely equivalent.

%
%

\subsection{Alternative Formulation of sHC} \label{section alternative}

The following equivalent characterization of strong hypercontractivity will be useful in what follows.

\begin{proposition} \label{prop SHC increasing} Fix $c>0$ and let $q(r)$ denote the function $q(r) = r^{-2/c}$. A measure $\mu$ satisfies strong hypercontractivity (\ref{sHC}) if and
only
if for each function $f\in L^1(\mu)\cap\LSH$,
\[ \|f_r\|_{q(r)} \le \|f\|_1 \quad \text{and} \quad \|f_r\|_1 \le \|f\|_1, \quad \text{for }\; r\in(0,1]. \]
\end{proposition}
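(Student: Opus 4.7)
The strategy is to translate between the two-parameter family $(p,q)$ in (\ref{sHC}) and the one-parameter family $r$ in the alternative formulation, via the substitution $g=|f|^p$, which preserves the $\LSH$ cone.

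For the forward direction, assume (\ref{sHC}) holds, and take $f\in L^1(\mu)\cap\LSH$.  Applying (\ref{sHC}) with $p=1$ and $q=q(r)=r^{-2/c}$, the cutoff $(p/q)^{c/2}=r$ is matched exactly, yielding $\|f_r\|_{q(r)}\le\|f\|_1$ for $r\in(0,1]$.  Applying it instead with $p=q=1$ (so the condition reads $r\le 1$) gives $\|f_r\|_1\le\|f\|_1$.

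For the reverse direction, fix $0<p\le q<\infty$, $f\in L^p(\mu)\cap\LSH$, and $0<r\le(p/q)^{c/2}$.  Set $g=|f|^p$; since $p\ln|f|=\ln g$ is subharmonic, $g\in\LSH$, and $\|g\|_1=\|f\|_p^p<\infty$, so $g\in L^1(\mu)\cap\LSH$.  Note $g_r(x)=|f_r(x)|^p$, and let $Q=q/p\ge 1$; the constraint $r\le(p/q)^{c/2}$ is precisely $Q\le q(r)$.  Applying the assumed first inequality to $g$, then invoking monotonicity of $L^Q(\mu)$-norms (for the probability measure $\mu$) across $1\le Q\le q(r)$, gives
\[ \|f_r\|_q^p = \Bigl(\int|f_r|^q\,d\mu\Bigr)^{p/q} = \|g_r\|_Q \le \|g_r\|_{q(r)} \le \|g\|_1 = \|f\|_p^p, \]
so $\|f_r\|_q\le\|f\|_p$, which is (\ref{sHC}).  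The boundary case $p=q$, i.e.\ $Q=1$, is covered directly by the second assumed inequality $\|g_r\|_1\le\|g\|_1$.

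There is no substantive analytic obstacle here: the only items to verify carefully are the $\LSH$-closure $g=|f|^p\in\LSH$ for $p>0$ and the numerical equivalence $r\le(p/q)^{c/2}\iff q/p\le r^{-2/c}$; the rest is the standard Jensen monotonicity of $L^Q$-norms on a probability space.
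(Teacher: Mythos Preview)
Your proof is correct and the forward direction matches the paper's exactly.  In the reverse direction you take a genuinely different, slightly shorter route: the paper first establishes the \emph{equality} case $r=(p/q)^{c/2}$ via the substitution $g=f^p$ (as you do), and then handles the strict case $r'<(p/q)^{c/2}$ by writing $r'=s\cdot r(p,q)$ with $s\in(0,1]$, observing $f_{r'}=(f_{r(p,q)})_s$, and applying the \emph{second} assumed inequality $\|\cdot_s\|_1\le\|\cdot\|_1$ to $(f_{r(p,q)})^q\in L^1(\mu)\cap\LSH$.  You instead dispatch the strict case in one line via Jensen monotonicity $\|g_r\|_Q\le\|g_r\|_{q(r)}$ for $1\le Q\le q(r)$.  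Your argument is cleaner but uses that $\mu$ is a probability measure (so that $L^Q$-norms are monotone in $Q$); the paper's semigroup-plus-second-inequality argument does not need this, which is why the second hypothesis is genuinely required there rather than only in the boundary case $p=q$.  In the paper's setting this distinction is harmless, since $\mu$ is a probability measure throughout the applications, but it is worth noting that your route silently imports that assumption.
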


\begin{remark} Similarly, the form of strong hypercontractivity given in Theorem \ref{thm sLSI <=> sHC} is equivalent to the same inequalities above holding for all $f$ in the nominally smaller space $\mathrm{LSH}^{1<q(r)}$. \end{remark}

\noindent For the proof, it is useful to note that the class $\mathrm{LSH} $ is closed under $f\mapsto f^p$ for any $p>0$.

\begin{proof} First, suppose (\ref{sHC}) holds with constant $c$.  The case $p=q=1$ yields $\|f_r\|_1 \le \|f\|_1$ for $0<r\le (p/q)^{2/c} = 1$.  More generally, by (\ref{sHC}), $\|f_r\|_q \le \|f\|_1$ whenever $0<r\le (1/q)^{c/2}$; i.e. whenever $q\ge r^{-c/2} = q(r)$.  In particular, it follows that $\|f_r\|_{q(r)} \le \|f\|_1$ as claimed.

\medskip

Conversely, suppose the above conditions hold true.  Fix $q\ge p>0$ and let $f\in L^p(\mu)\cap\LSH$.  Then $f^p\in L^1(\mu)\cap\LSH$, and so by assumption we have $\|(f^p)_r\|_{q(r)} \le \|f^p\|_1$ for $0<r\le 1$.  Since $(f^p)_r = (f_r)^p$, it follows immediately that $\|f_r\|_{p\cdot q(r)}^p \le \|f\|_p^p$.  Setting $q=p\cdot q(r)$ and solving for $r$, we have $r=r(p,q)\equiv (p/q)^{c/2}$, and so we have proved the equality case of (\ref{sHC}).  Finally, suppose that $r'\le r(p,q)= (p/q)^{c/2}$; then there is $s\in(0,1]$ so that $r' = s\cdot r(p,q)$.  Dilations form a multiplicative semigroup, so $f_{r'} = (f_{r(p,q)})_s$.  We have just proved that $f_{r(p,q)}\in L^q$, and hence $(f_{r(p,q)})^q$ is in $L^1(\mu)$.  Therefore, by assumption, $\|[(f_{r(p,q)})^q]_s\|_1 \le \|(f_{r(p,q)})^q\|_1$; unwinding this yields
\[ \|f_{r'}\|_q^q = \|(f_{r(p,q)})_s\|_q^1 = \|[(f_{r(p,q)})_s]^q\|_1 =  \|[(f_{r(p,q)})^q]_s\|_1 \le \|(f_{r(p,q)})^q\|_1 = \|f_{r(p,q)}\|_q^q\le \|f\|_p^q \]
by the equality case, thus proving (\ref{sHC}).
\end{proof}

\begin{remark} In fact, (\ref{sHC}) implies the putatively stronger statement that $r\mapsto \|f_r\|_{q(r)}$ is non-decreasing on $[0,1]$; however, the weaker form presented above is generally easier to work with. \end{remark}

\subsection{Convolution property}
We will use the convolution operation to prove the Sobolev density theorem at the heart of this paper, as well as Theorem \ref{t.comp.supp}.  We begin by showing that this operation preserves the cone $\LSH$.

\begin{lemma} \label{lemma convolution preserves LSH} Let $f\in\LSH$.  Let $\varphi\ge 0$ be a $C^\infty_c$ test function.  Then $f\ast\varphi\in \LSH\cap C^\infty$.
\end{lemma}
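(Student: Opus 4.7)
My plan is a two-step argument: first establish the lemma in the smooth, strictly positive case by a direct Cauchy--Schwarz calculation, then extend to general LSH $f$ via mollification.

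Since subharmonicity of $\log|f|$ forces $|f|$ to be locally integrable, convolution with $\varphi\in C_c^\infty$ automatically produces a $C^\infty$ function, and replacing $f$ by $|f|$ (still LSH), I may assume $f\ge 0$. When $f$ is moreover smooth and strictly positive, the LSH condition is equivalent to $f\Delta f\ge |\nabla f|^2$ pointwise. Differentiating under the integral sign gives $\nabla(f*\varphi)=(\nabla f)*\varphi$ and $\Delta(f*\varphi)=(\Delta f)*\varphi$. Applying vector-valued Cauchy--Schwarz to the integrand $\nabla f(x-y)\,\varphi(y)$ via the splitting $[\nabla f/\sqrt{f}](x-y)\sqrt{\varphi(y)}\cdot\sqrt{f(x-y)\varphi(y)}$ yields
\[ |\nabla(f*\varphi)(x)|^2\le \Bigl(\int \tfrac{|\nabla f|^2}{f}(x-y)\,\varphi(y)\,dy\Bigr)\cdot (f*\varphi)(x)\le \Delta(f*\varphi)(x)\cdot (f*\varphi)(x), \]
where the second inequality uses $|\nabla f|^2/f\le \Delta f$. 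Since $f*\varphi>0$, this is equivalent to $\Delta\log(f*\varphi)\ge 0$, proving $f*\varphi\in\LSH$.

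For general $f\ge 0$ in $\LSH$, I will mollify $u:=\log f$: set $u_n:=u*\psi_{1/n}$ for a standard nonnegative radial $C_c^\infty$ mollifier $\psi_\epsilon$ of total mass $1$. Each $u_n$ is smooth and subharmonic, and because the spherical means $r\mapsto M_r[u](x)$ are non-decreasing for subharmonic $u$, one gets $u_n(x)\searrow u(x)$ pointwise. Let $f_n:=e^{u_n}$: these are smooth, strictly positive, and LSH, with $f_n\searrow f$ (using $e^{-\infty}=0$). Monotone convergence gives $f_n*\varphi\searrow f*\varphi$ pointwise, and by the smooth case each $\log(f_n*\varphi)$ is subharmonic. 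Since $f*\varphi\in C^\infty$ guarantees that $\log(f*\varphi)$ is upper semicontinuous and not identically $-\infty$, the pointwise decreasing limit of the subharmonic functions $\log(f_n*\varphi)$ is itself subharmonic, i.e., $f*\varphi\in\LSH$.

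The principal obstacle is the Cauchy--Schwarz bookkeeping in the smooth case: the $1/\sqrt{f}$ splitting must be arranged so that one factor reproduces $f*\varphi$ while the other collapses to $\Delta(f*\varphi)$ after the pointwise LSH inequality is invoked. The mollification step is then essentially routine, provided one cites the standard fact that $u*\psi_\epsilon\searrow u$ for radial mollifiers applied to a subharmonic $u$.
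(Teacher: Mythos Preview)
Your proof is correct, and the overall architecture---reduce to the smooth strictly positive case, then pass to general $f$ via a decreasing approximation by smooth $\LSH$ functions---matches the paper's. The approximation step is essentially identical: the paper also produces smooth subharmonic $u_n\searrow\log f$ (citing \cite{Lelong Gruman}) and exponentiates, though it adds $1/n$ to force strict positivity rather than relying on finiteness of the mollified $u_n$.

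The genuine difference is in how the smooth case is handled. The paper does not compute: it invokes \cite[Lemma~2.4]{GKL}, a general statement that a parameter-integral $\int g(x,\omega)\,d\omega$ of $\LSH$ functions is again $\LSH$ under mild uniformity hypotheses, and applies it with $g(x,\omega)=g_n(x-\omega)\varphi(\omega)$. Your route is a direct pointwise verification of $\Delta\log(f\ast\varphi)\ge 0$ via the Cauchy--Schwarz splitting
\[
\nabla f(x-y)\,\varphi(y)=\frac{\nabla f(x-y)}{\sqrt{f(x-y)}}\sqrt{\varphi(y)}\cdot\sqrt{f(x-y)\varphi(y)},
\]
which turns the $\LSH$ inequality $|\nabla f|^2/f\le\Delta f$ directly into $|\nabla(f\ast\varphi)|^2\le (f\ast\varphi)\,\Delta(f\ast\varphi)$. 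This is more self-contained and makes the role of log-subharmonicity completely explicit; the paper's version, by contrast, isolates a reusable lemma and avoids any differential computation. Both are short, but yours is arguably the cleaner argument for this specific statement.

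One minor sharpening: your opening sentence says subharmonicity of $\log|f|$ forces $|f|$ to be locally \emph{integrable}; in fact it gives the stronger conclusion that $|f|$ is locally \emph{bounded} (upper semicontinuity plus local upper bounds on $\log|f|$), which is what actually justifies differentiating under the integral in the smooth case and is what the paper states.
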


\begin{proof} Since $f\in\LSH$, $f\ge 0$ and $\ln f$ is subharmonic.  In particular, $\ln f$ is upper semi-continuous and locally bounded above, and so the same holds for $f$.  Thus $f$ is locally bounded and measurable; thus $f\ast\varphi$ defines an $L^1_{\mathrm{loc}}\cap C^\infty$ function.  We must show it is $\LSH$.

\medskip

Any subharmonic function is the decreasing limit of a sequence of $C^\infty$ subharmonic functions, cf. \cite[Appendix 1, Proposition 1.15]{Lelong Gruman}.  Applying this to $\ln f$, there is a sequence $f_n\in\LSH\cap C^\infty$ such that $f_n\downarrow f$.   Let $g_n = f_n+\frac1n$; so $g_n$ is strictly positive, and $g_n\downarrow f$.  Since $\varphi$ is $\ge 0$, it follows from the Monotone Convergence Theorem that $g_n\ast\varphi \downarrow f\ast\varphi$ pointwise.

\medskip

Now, $(g_n\ast\varphi)(x) = \int_{\R^n} g_n(x-\omega)\varphi(\omega)\,d\omega$.  Since translation and positive dilation preserve the cone $\LSH$, the function $x\mapsto g_n(x-\omega)\varphi(\omega)$ is continuous and $\LSH$ for each $\omega$.  Moreover, the function $\omega\mapsto g_n(x-\omega)\varphi(\omega)$ is continuous and bounded.  Finally, for small $r$, $\sup_{|t-x|\le r} g_n(t-\omega)\varphi(\omega) \le \|\varphi\|_\infty \sup_{|t|\le |x|+r} g_n(t)$ is bounded uniformly in $\omega$.  It follows from \cite[Lemma 2.4]{GKL} that $g_n\ast\varphi$ is $\LSH$.  (The statement of that lemma apparently requires the supremum to be uniform in $x$ as well, but this is an overstatement; as the proof of the lemma clearly
shows, only uniformity in $\omega$ is required).

\medskip

Thus, $f\ast\varphi$ is the decreasing limit of strictly positive $\LSH$ functions $g_n\ast\varphi$.  Applying the Monotone Convergence Theorem to integrals of $\ln(g_n\ast\varphi)$ about spheres now shows that $\ln(f\ast\varphi)$ is subharmonic, so $f\ast\varphi\in\LSH$ as claimed.
\end{proof}

\subsection{Compactly Supported Measures}

This section is devoted to the proof of Theorem \ref{t.comp.supp}.  It follows the now-standard Gross proof of such equivalence: differentiating hypercontractivity at the critical time yields the log Sobolev inequality, and vice versa.  The technical issues related to differentiating under the integral can be dealt with fairly easily in the case of a compactly supported measure; the remainder of this paper develops techniques for handling measures with non-compact support.  The forward direction of the theorem, that (\ref{sHC}) implies (\ref{sLSI}) for compactly supported measures, is \cite[Theorem 5.2]{GKL}, so we will only include the proof of the reverse direction here.
\medskip

\begin{proof}[Proof of Theorem \ref{t.comp.supp}]
By assumption, (\ref{sLSI}) holds for sufficiently smooth and integrable functions; here we interpret that precisely to mean $\mathrm{Ent}_\mu(g)\le \frac{c}{2}\int Eg\,d\mu$ for all $g\in C^1(\R^n)$ for which both sides are finite.  Fix $f\in L^1(\mu)\cap\mathrm{LSH}\cap C^1$.  Utilizing Proposition \ref{prop SHC increasing}, we must consider the function $\alpha(r) = \|f_r\|_{q(r)}$ where $q(r) = r^{-2/c}$.  Let $\beta(r) = \alpha(r)^{q(r)} = \int f(rx)^{q(r)}\,\mu(dx)$ and set $\beta_x(r) = f(rx)^{q(r)}$ so that $\beta(r) = \int \beta_x(r)\,\mu(dx)$.  Then,

\[ \frac{\del}{\del r}\ln \beta_x(r) = q'(r)\ln f(rx)+\frac{q(r)}{f(rx)}x\cdot\nabla f(rx). \]
Since $q'(r) = -\frac{2}{rc}q(r)$, and since $x\cdot \nabla f(rx) = \frac1r(Ef)_r(x)= \frac1rE(f_r)(x)$, we have
\begin{equation} \label{eq compact beta'} \frac{\del}{\del r}\beta_x(r) = -\frac{2}{rc}f_r(x)^{q(r)}\ln f_r(x)^{q(r)} + \frac1rq(r)f_r(x)^{q(r)-1}(Ef_r)(x). \end{equation}
Fix $0<\e<1$.  As $f$ is $C^1$, the function (of $x$) on the right-hand-side of (\ref{eq compact beta'}) is uniformly bounded for $r\in(\e,1]$ and $x\in\supp\mu$ (due to compactness).  The Dominated Convergence Theorem thus allows differentiation under the integral, and so
\begin{equation} \label{eq compact beta' 2} \beta'(r) = \int \frac{\del}{\del r}\beta_x(r)\,\mu(dx). \end{equation}
Thus, since $\alpha(r) = \beta(r)^{1/q(r)}$ and $\beta(r)>0$, it follows that $\alpha$ is $C^1$ on $(\e,1]$ and the chain rule yields
\begin{equation} \label{eq compact alpha'} \alpha'(r) = \frac{\alpha(r)}{q(r)\beta(r)}\frac{2}{rc}\left[\beta(r)\ln\beta(r) + \frac{rc}{2}\beta'(r)\right]. \end{equation}
From (\ref{eq compact beta'}) and (\ref{eq compact beta' 2}), the quantity in brackets is
\begin{align}\nonumber &\int f_r^{q(r)}\,d\mu\cdot \ln\int f_r^{q(r)}\,d\mu + \frac{rc}{2}\int\left(-\frac{2}{rc}f_r^{q(r)}\ln f_r^{q(r)} + \frac1r q(r)f_r^{q(r)-1}Ef_r\right)\,d\mu \\ \nonumber
=& \int f_r^{q(r)}\,d\mu\cdot \ln\int f_r^{q(r)}\,d\mu - \int f_r^{q(r)}\ln f_r^{q(r)}\,d\mu + q(r)\frac{c}{2}\int f_r^{q(r)-1}Ef_r\,d\mu \\ \label{e.Ent.ineq}
=& -\mathrm{Ent}_\mu(f_r^{q(r)}) + \frac{c}{2}\int E(f_r^{q(r)})\,d\mu,
\end{align}
where the equality in the last term follows from the chain rule.

Since $f\in C^1$, it is bounded on the compact set $\supp\mu$, and so are all of its dilations $f_r$.  Hence, both terms in (\ref{e.Ent.ineq}) are finite, and so by the assumption of the theorem, this term is $\ge 0$.  From (\ref{eq compact alpha'}), we therefore have $\alpha'(r)\ge 0$ for all $r>\epsilon$.  Since this is 
true
for each $\epsilon>0$, it holds true for $r\in(0,1]$.  This verifies the first inequality in Proposition \ref{prop SHC increasing}.  For the second, we use precisely the same argument to justify differentiating under the integral to find
\[ \frac{\del}{\del r} \|f_r\|_1 = \int \frac{\del}{\del r}f_r(x)\,\mu(dx) = \frac1r\int Ef_r(x)\,\mu(dx) \ge \frac{2}{cr}\mathrm{Ent}_\mu(f_r) \ge 0 \]
by the assumption of (\ref{sLSI}).  This concludes the proof for $f\in C^1$. 

Now, if  $f\in L^1(\mu)\cap\mathrm{LSH}$, we consider a smooth approximate identity sequence  $\varphi_k$. The inequalities in Proposition  \ref{prop SHC increasing} hold for $f\ast\varphi_k$ by the first part of the proof and Lemma \ref{lemma convolution preserves LSH}. Note by simple change of variables that $(f\ast\varphi_k)_r = f_r \ast (r^n\varphi_k)_r$, and that $(r^n\varphi_k)_r$ is also an approximate identity sequence. The function $f_r$ is $\mathrm{LSH}$, so it is upper semi-continuous and consequently locally bounded. Thus $f_r\in L^{q(r)}$ and $(f\ast\varphi_k)_r$ converges to  $ f_r$ in  $L^{q(r)}$.  This concludes the proof.
\end{proof}

\section{Density results through $\LSH$ functions} \label{sect density}

\subsection{Properties of Euclidean regular measures} \label{sect properties E.R.}

In this section, we show several closure properties of the class of Euclidean regular measures (of any given exponential type $p\in[0,\infty)$): it is closed under bounded perturbations, convex combinations, product, and convolution.  Throughout, we use $\mu_i$ ($i=1,2$) to stand for such measures, and $\rho_i$ to stand for their densities.

\begin{proposition} \label{prop closure 1} Let $\mu_1$ and $\mu_2$ be positive measures on $\R^n$, and suppose $\mu_1$ is Euclidean exponential type $p\in[0,\infty)$.  If there are constants $C,D>0$ such that $C\mu_1 \le \mu_2 \le D\mu_1$, then $\mu_2$ is also Euclidean exponential type $p$. \end{proposition}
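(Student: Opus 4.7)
The proof is an immediate comparison argument; the hypothesis $C\rho_1 \le \rho_2 \le D\rho_1$ lets us sandwich every quantity appearing in Definition \ref{def Euclidean regular} for $\mu_2$ between constant multiples of the corresponding quantity for $\mu_1$. First, positivity of $\rho_2$ is automatic: since $\rho_1(x)>0$ for every $x$ and $\rho_2(x)\ge C\rho_1(x)$, we get $\rho_2(x)>0$ everywhere.

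Next, for the main pointwise estimate, write for any $x$, any $a>1$ and any $y$,
\[
\frac{\rho_2(ax+y)}{\rho_2(x)} \;\le\; \frac{D\rho_1(ax+y)}{C\rho_1(x)} \;=\; \frac{D}{C}\cdot\frac{\rho_1(ax+y)}{\rho_1(x)}.
\]
Multiplying by $|x|^p$ and taking the supremum over $x\in\R^n$ and $|y|\le s$ yields
\[
C^p_{\mu_2}(a,s) \;\le\; \frac{D}{C}\, C^p_{\mu_1}(a,s),
\]
which is finite for every $a>1$ and $s\ge 0$ by the assumption that $\mu_1$ is Euclidean exponential type $p$; this verifies (\ref{E.R.1}) for $\mu_2$. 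The same computation with $y=0$, $p=0$, and the supremum taken over $1<a<1+\e$ (for the same $\e>0$ provided by $\mu_1$) verifies (\ref{E.R.2}) for $\mu_2$.

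There is no real obstacle here: the definition of Euclidean exponential type only involves pointwise ratios of the density with itself, so a two-sided multiplicative comparison between $\rho_1$ and $\rho_2$ transfers the property at the modest cost of a factor $D/C$ in the constants. The same argument would in fact give the proposition under the weaker hypothesis that $\rho_2/\rho_1$ is bounded above and below by positive constants, which is how we will invoke it in the sequel.
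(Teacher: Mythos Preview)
Your proof is correct and follows essentially the same approach as the paper's: both use the sandwich $C\rho_1\le\rho_2\le D\rho_1$ to bound the ratio $\rho_2(ax+y)/\rho_2(x)$ by $(D/C)\,\rho_1(ax+y)/\rho_1(x)$, yielding $C^p_{\mu_2}(a,s)\le (D/C)\,C^p_{\mu_1}(a,s)$ and the analogous bound for condition~(\ref{E.R.2}). Your version additionally makes the positivity of $\rho_2$ explicit, which the paper leaves implicit.
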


\begin{proof} The assumption is that $C\rho_1 \le \rho_2 \le D\rho_1$.  Let $\e>0$ be such that $\sup_{1<a<1+\e} C^0_{\rho_1}(a,0) < \infty$.  Then for any such $a$,
\[ \frac{\rho_2(ax)}{\rho_2(x)} \le \frac{D \rho_1(ax)}{C\rho_1(x)} \le \frac{D}{C}C^0_{\rho_1}(a,0) \]
for all $x$; thus $C^0_{\rho_2}(a,0) \le \frac{D}{C}C^0_{\rho_1}(a,0)$, and so $\sup_{1<a<1+\e}C^0_{\rho_2}(a,0)<\infty$.  Similarly, for $x,y\in\R^n$ and $a>1$,
\[ |x|^p\frac{\rho_2(ax+y)}{\rho_2(x)} \le |x|^p \frac{D\rho_1(ax+y)}{C\rho_1(x)} \le \frac{D}{C}C^p_{\rho_1}(a,|y|) \]
and so $C^p_{\rho_2}(a,s) \le \frac{D}{C}C^p_{\rho_1}(a,s) <\infty$.
\end{proof}

\begin{proposition} \label{prop closure 2} Let $\mu_1$ and $\mu_2$ be Euclidean regular measures of exponential type $p\in[0,\infty)$.  For any $t\in[0,1]$, $\mu = (1-t)\mu_1 + t\mu_2$ is Euclidean exponential type $p$. \end{proposition}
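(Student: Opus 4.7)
The plan is to reduce everything to the single elementary inequality
\[ \frac{\rho(ax+y)}{\rho(x)} \le \frac{\rho_1(ax+y)}{\rho_1(x)} + \frac{\rho_2(ax+y)}{\rho_2(x)}, \]
where $\rho = (1-t)\rho_1 + t\rho_2$ is the density of $\mu$. The edge cases $t=0$ and $t=1$ are trivial, so I would immediately reduce to $t\in(0,1)$; in that range $\rho(x)>0$ for every $x$ since both $\rho_i>0$.

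Next, the key estimate. Since both summands in $\rho(x)=(1-t)\rho_1(x)+t\rho_2(x)$ are non-negative, we have $\rho(x)\ge (1-t)\rho_1(x)$ and $\rho(x)\ge t\rho_2(x)$. Splitting the numerator $\rho(ax+y) = (1-t)\rho_1(ax+y)+t\rho_2(ax+y)$ and applying these two lower bounds termwise cancels the weights $(1-t)$ and $t$, producing the displayed inequality above.

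From here the proof is mechanical: multiply through by $|x|^p$, take $\sup_x\sup_{|y|\le s}$, and read off
\[ C^p_\rho(a,s) \le C^p_{\rho_1}(a,s) + C^p_{\rho_2}(a,s), \]
which is finite for every $a>1$ and $s\ge 0$ by hypothesis. Setting $p=0$, $s=0$ and restricting to $1<a<1+\varepsilon$ for a common $\varepsilon>0$ (the minimum of the two $\varepsilon$'s supplied by Definition \ref{def Euclidean regular} for $\rho_1$ and $\rho_2$) verifies condition (\ref{E.R.2}) for $\rho$. Thus $\mu$ meets both requirements of Definition \ref{def Euclidean regular} at the same exponential type $p$.

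There is no real obstacle here; the only place one could slip is forgetting that the bound $\rho\ge (1-t)\rho_1$ (and analogously for $\rho_2$) lets us cancel the convex-combination weights cleanly, so that the resulting estimate is just a sum of the two individual ratios with no $t$-dependent constant blowing up as $t\to 0$ or $t\to 1$. In fact the same argument gives closure under arbitrary finite (or even suitably controlled infinite) convex combinations, which may be worth recording as a remark after the proof.
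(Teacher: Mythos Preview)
Your argument is correct and close in spirit to the paper's, but the algebra is organized differently and yields a slightly weaker constant. The paper bounds the \emph{numerator} termwise, writing $|x|^p\rho_i(ax+y)\le C^p_{\rho_i}(a,|y|)\,\rho_i(x)$ and then recombining with the convex weights to get
\[ C^p_\rho(a,s)\le \max\{C^p_{\rho_1}(a,s),\,C^p_{\rho_2}(a,s)\}. \]
You instead bound the \emph{denominator} below by each weighted summand, which cancels the weights and produces the sum $C^p_{\rho_1}(a,s)+C^p_{\rho_2}(a,s)$. Both suffice for the proposition; the paper's estimate is sharper and, contrary to your closing remark, it is actually the one that extends cleanly to infinite convex combinations (a $\sup$ stays finite under a uniform hypothesis, whereas your sum would need summability of the individual constants). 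Your handling of the edge cases $t\in\{0,1\}$ and the choice of a common $\varepsilon$ for condition~(\ref{E.R.2}) is fine and matches what the paper does implicitly.
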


\begin{proof} Let $\e>0$ be such that $\sup_{1<a<1+\e} C^0_{\rho_i}(a,0)<\infty$ for $i=1,2$.  Let $\rho$ be the density of $\mu$.  Then for any $x\in\R^n$,
\begin{align*} \rho(ax) = (1-t)\rho_1(ax) + t\rho_2(ax) &\le (1-t)C^0_{\rho_1}(a,0) \rho_1(x) + tC^0_{\rho_2}(a,0) \rho_2(x) \\
&\le \max\{C^0_{\rho_1}(a,0),C^0_{\rho_2}(a,0)\}\rho(x) \end{align*}
and so $C^0_{\rho}(a,0) \le \max\{C^0_{\rho_1}(a,0),C^0_{\rho_2}(a,0)\}$ is uniformly bounded for $1<a<1+\e$, as required.  Similarly, for $x,y\in\R^n$ and $a>1$,
\begin{align*} |x|^p\rho(ax+y) \le (1-t) |x|^p\rho_1(ax+y) + t |x|^p\rho_2(ax+y) &\le (1-t) C^p_{\rho_1}(a,|y|)\rho_1(x) + t C^p_{\rho_2}(a,|y|)\rho_2(x) \\
&\le \max\{C^p_{\rho_1}(a,|y|),C^p_{\rho_2}(a,|y|)\}\rho(x)
\end{align*}
which shows that $C^p_{\rho}(a,s) \le \max\{C^p_{\rho_1}(a,s),C^p_{\rho_2}(a,s)\}<\infty$ for $a\ge 1$ and $s\ge 0$.
\end{proof}

\begin{proposition} Let $p\in[0,\infty)$, let $\mu_1$ be a Euclidean exponential type $p$ measure on $\R^{n_1}$, and let Let $\mu_2$ be a Euclidean exponential type $p$ measure on $\R^{n_2}$.  Then the product measure $\mu_1\otimes\mu_2$ is Euclidean exponential type $p$ on $\R^{n_1+n_2}$.
\end{proposition}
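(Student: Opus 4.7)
The plan is to exploit the product structure of the density $\rho(x_1,x_2) = \rho_1(x_1)\rho_2(x_2)$, reducing all two-factor estimates to the one-factor constants $C^p_{\rho_i}$ and $C^0_{\rho_i}$. Throughout, I write points of $\R^{n_1+n_2}$ as $x=(x_1,x_2)$ and $y=(y_1,y_2)$, noting that $|y_i|\le|y|$.

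Condition (\ref{E.R.2}) is immediate from the multiplicative factorization: for every $a$ and every $x$,
\[
\frac{\rho(ax)}{\rho(x)} = \frac{\rho_1(ax_1)}{\rho_1(x_1)}\cdot\frac{\rho_2(ax_2)}{\rho_2(x_2)} \le C^0_{\rho_1}(a,0)\,C^0_{\rho_2}(a,0),
\]
which is uniformly bounded for $a$ in the intersection of the two neighborhoods of $1$ furnished by the hypotheses on $\mu_1$ and $\mu_2$. For (\ref{E.R.1}), the only substantive input is the elementary inequality $|x|^p \le 2^{p/2}(|x_1|^p + |x_2|^p)$, which follows by distinguishing the cases $|x_1|\ge|x_2|$ and $|x_2|\ge|x_1|$. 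Applying this bound and grouping terms yields
\[
|x|^p\frac{\rho(ax+y)}{\rho(x)} \le 2^{p/2}\!\left[|x_1|^p\frac{\rho_1(ax_1+y_1)}{\rho_1(x_1)}\right]\!\frac{\rho_2(ax_2+y_2)}{\rho_2(x_2)} + 2^{p/2}\frac{\rho_1(ax_1+y_1)}{\rho_1(x_1)}\!\left[|x_2|^p\frac{\rho_2(ax_2+y_2)}{\rho_2(x_2)}\right].
\]
The bracketed factors are bounded by $C^p_{\rho_1}(a,s)$ and $C^p_{\rho_2}(a,s)$ (using $|y_i|\le s$), both finite by hypothesis. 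The remaining bare ratios I bound by $C^0_{\rho_j}(a,s)$, which is finite because, as the paper notes just after Definition \ref{def Euclidean regular}, a Euclidean exponential type $p$ measure is automatically Euclidean regular, so $C^0_{\rho_j}(a,s)<\infty$ for all $a>1$ and $s\ge 0$. Combining,
\[
C^p_\rho(a,s)\le 2^{p/2}\bigl[C^p_{\rho_1}(a,s)\,C^0_{\rho_2}(a,s)+C^0_{\rho_1}(a,s)\,C^p_{\rho_2}(a,s)\bigr]<\infty.
\]

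No step looks difficult, and the argument parallels the proofs of Propositions \ref{prop closure 1} and \ref{prop closure 2}. The single point that warrants care is the appeal to $C^0_{\rho_j}(a,s)<\infty$ for all $a>1$ and $s\ge 0$, which uses the implication "type $p$ $\Rightarrow$ type $0$" asserted earlier in the paper; without it one would have to impose the regular bound as a separate hypothesis.
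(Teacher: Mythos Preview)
Your proof is correct and follows essentially the same route as the paper: factor the density, handle (\ref{E.R.2}) multiplicatively, and for (\ref{E.R.1}) split $|x|^p$ into the two coordinate contributions so that each term is controlled by one $C^p_{\rho_i}$ and one $C^0_{\rho_j}$. The only cosmetic difference is the constant in the splitting inequality: you use $|x|^p\le 2^{p/2}(|x_1|^p+|x_2|^p)$ (valid for all $p\ge 0$ via the case split you indicate), whereas the paper uses $(|x_1|+|x_2|)^p\le 2^{p-1}(|x_1|^p+|x_2|^p)$.
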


\begin{proof} For $i=1,2$ let $\rho_i$ be the density of $\mu_i$; then $\mu_1\otimes\mu_2$ has density $\rho_1\otimes\rho_2(x_1,x_2) = \rho_1(x_1)\rho_2(x_2)$.  Fix $\e>0$ so that $\sup_{1<a<1+\e}C^p_{\rho_i}(0,a)<\infty$ for $i=1,2$.  Then, letting $\mathbf{x}=(x_1,x_2)$,
\[ \rho_1\otimes\rho_2(a\mathbf{x}) = \rho_1(ax_1)\rho_2(ax_2) \le C^0_{\rho_1}(a,0)\,\rho_1(x_1)\cdot C^0_{\rho_2}(a,0)\,\rho_2(x_2) \]
and so $C^0_{\rho_1\otimes\rho_2}(a,0) \le C^0_{\rho_1}(a,0)\cdot C^0_{\rho_2}(a,0)$, meaning $\sup_{1<a<1+\e}C^0_{\rho_1\otimes\rho_2}(a,0)<\infty$. Similarly, for fixed $\mathbf{x},\mathbf{y}\in\R^{n_1+n_2}$ and $a>1$,
\[ |\mathbf{x}|^p\rho_1\otimes\rho_2(a\mathbf{x}+\mathbf{y}) = (|x_1|+|x_2|)^p\rho_1(ax_1+y_1)\rho_2(ax_2+y_2). \]
By elementary calculus, $(|x_1|+|x_2|)^p \le 2^{p-1}(|x_1|^p+|x_2|^p)$, and so we have
\[ |\mathbf{x}|^p\rho_1\otimes\rho_2(a\mathbf{x}+\mathbf{y}) \le 2^{p-1}|x_1|^p\rho_1(ax_1+y_1)\cdot \rho_2(ax_2+y_2) + 2^{p-1}\rho_1(ax_1+y_1)\cdot |x_2|^p\rho_2(ax_2+y_2). \] 
For the first term, we have $|x_1|^p\rho_1(ax_1+y_1) \le C^p_{\rho_1}(a,|y_1|)\rho_1(x_1)$ while $\rho_2(ax_2+y_2) \le C^0_{\rho_2}(a,|y_2|)$; for the second term, we have $\rho_1(ax_1+y_1) \le C^0_{\rho_1}(a,|y_1|)\rho_1(x_1)$ while $|x_2|^p\rho_2(ax_2+y_2) \le C^p_{\rho_2}(a,|y_2|)$.  If $|y|\le s$ then $|y_i|\le s$ for $i=1,2$.  All together, this shows that
\[ C^p_{\rho_1\otimes\rho_2}(a,s) \le 2^{p-1}\left[C^p_{\rho_1}(a,s)C^0_{\rho_2}(a,s) + C^0_{\rho_1}(a,s)C^p_{\rho_2}(a,s)\right] \]
which is finite since both $\rho_1,\rho_2$ are Euclidean exponential type $p$ (and hence also Euclidean regular).  This proves the proposition.
\end{proof}

\begin{proposition} Let $\mu_1$ and $\mu_2$ be positive measures on $\R^n$, each of Euclidean exponential type $p\in[0,\infty)$.  Then $\mu_1\ast\mu_2$ is Euclidean exponential type $p$.
\end{proposition}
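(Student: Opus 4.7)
The plan is to mimic the product-measure argument from the preceding proposition, but with the extra step of first ``pulling out'' the dilation $a$ from the convolution integral via a change of variables, so that both densities can be hit simultaneously by their Euclidean-regular estimates.

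Write $\rho = \rho_1 \ast \rho_2$. Then for any $a>1$ and any $x,y \in \R^n$,
\[
\rho(ax+y) \;=\; \int_{\R^n} \rho_1(ax + y - z)\,\rho_2(z)\,dz.
\]
The first key step is the substitution $z = aw$ (so $dz = a^n\,dw$), which yields
\[
\rho(ax+y) \;=\; a^n \int_{\R^n} \rho_1\bigl(a(x-w)+y\bigr)\,\rho_2(aw)\,dw.
\]
Now both factors in the integrand are in dilated form $\rho_i(a\,\cdot\, + \cdot)$, so we can apply the Euclidean-regularity hypothesis to each. For condition (\ref{E.R.2}), I take $y=0$: the bounds $\rho_1(a(x-w)) \le C^0_{\rho_1}(a,0)\,\rho_1(x-w)$ and $\rho_2(aw) \le C^0_{\rho_2}(a,0)\,\rho_2(w)$ give
\[
\rho(ax) \;\le\; a^n\, C^0_{\rho_1}(a,0)\,C^0_{\rho_2}(a,0)\,\rho(x),
\]
so $C^0_\rho(a,0) \le a^n C^0_{\rho_1}(a,0)C^0_{\rho_2}(a,0)$, which is uniformly bounded for $1<a<1+\e$.

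For condition (\ref{E.R.1}) with weight $|x|^p$, the trick is the elementary inequality $|x|^p = |(x-w)+w|^p \le 2^{p-1}(|x-w|^p + |w|^p)$, already used in the product-measure proof. Splitting accordingly,
\[
|x|^p \rho(ax+y) \;\le\; a^n 2^{p-1}\!\int \bigl[|x-w|^p \rho_1(a(x-w)+y)\bigr]\rho_2(aw)\,dw \;+\; a^n 2^{p-1}\!\int \rho_1(a(x-w)+y)\bigl[|w|^p \rho_2(aw)\bigr]\,dw.
\]
On the first integral I apply $|x-w|^p\rho_1(a(x-w)+y) \le C^p_{\rho_1}(a,|y|)\,\rho_1(x-w)$ together with $\rho_2(aw) \le C^0_{\rho_2}(a,0)\,\rho_2(w)$; on the second, I reverse the roles and use $\rho_1(a(x-w)+y) \le C^0_{\rho_1}(a,|y|)\,\rho_1(x-w)$ and $|w|^p\rho_2(aw) \le C^p_{\rho_2}(a,0)\,\rho_2(w)$. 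After integrating out the convolution, both pieces reduce to constants times $\rho(x)$, giving
\[
C^p_\rho(a,s) \;\le\; a^n 2^{p-1}\bigl[\,C^p_{\rho_1}(a,s)\,C^0_{\rho_2}(a,0) + C^0_{\rho_1}(a,s)\,C^p_{\rho_2}(a,0)\,\bigr] \;<\; \infty,
\]
since each $\rho_i$ is Euclidean exponential type $p$ (hence also Euclidean regular).

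The only even mildly subtle step is the change of variables $z = aw$: without it, the two density factors in the convolution are not simultaneously in the ``dilated'' form to which the hypotheses apply. Once that move is made, the rest is a direct adaptation of the product-measure proof, with the $2^{p-1}(|x-w|^p + |w|^p)$ splitting playing exactly the same role as the $2^{p-1}(|x_1|^p + |x_2|^p)$ splitting there.
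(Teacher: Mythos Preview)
Your proof is correct and follows essentially the same approach as the paper: the change of variables $z=aw$ to put both densities in dilated form, the bound $C^0_\rho(a,0)\le a^nC^0_{\rho_1}(a,0)C^0_{\rho_2}(a,0)$ for condition~(\ref{E.R.2}), and the $|x|^p\le 2^{p-1}(|x-w|^p+|w|^p)$ splitting leading to the same final estimate for $C^p_\rho(a,s)$ are all exactly as in the paper's argument.
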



\begin{proof} Let $\rho_j$ be the density of $\mu_j$.  By assumption, for $i=1,2$ $C^{p_i}_{\rho_i}(a,s)<\infty$ for $a>1$ and $s\ge 0$, and there is $\e>0$ such that $\sup_{1<a<1+\e}C^{0}_{\rho_i}(a,0)<\infty$, cf.\ (\ref{eq E.R. constant}).  Then for $a\ge 1$ and $x\in\R^n$
\[ \rho_1\ast\rho_2(ax) = \int \rho_1(ax-u)\rho_2(u)\,du = a^n\int \rho_1(ax-av)\rho_2(av)\,dv. \]
By definition, $\rho_1(a(x-v)) \le C^0_{\rho_1}(a,0)\rho_1(x-v)$ and $\rho_2(av) \le C^0_{\rho_2}(a,0)\rho_2(v)$ for all $x,v$.  Thus
\[ \rho_1\ast\rho_2(ax) \le a^n C^0_{\rho_1}(a,0)\cdot C^0_{\rho_2}(a,0) \int \rho_1(x-v)\rho_2(v)\,dv = a^n C^0_{\rho_1}(a,0)\cdot C^0_{\rho_2}(a,0) \rho_1\ast\rho_2(x). \]
It follows that $C^0_{\rho_1\ast\rho_2}(a,0) \le a^n C^0_{\rho_1}(a,0)\cdot C^0_{\rho_2}(a,0)$, and hence
\begin{equation} \label{eq conv E.E.R. 1} \sup_{1<a<1+\e} C^0_{\rho_1\ast\rho_2}(a,0) \le (1+\e)^n \sup_{1<a<1+\e} C^0_{\rho_1}(a,0)\cdot\sup_{1<a<1+\e} C^0_{\rho_2}(a,0) <\infty \end{equation}
as required.  Similarly, for $x,y\in\R^n$ and $a>1$,
\[ |x|^{p} \rho_1\ast\rho_2(ax+y) = |x|^{p}\int \rho_1(ax+y-u)\rho_2(u)\,du = a^n\int |x|^{p} \rho_1(a(x-v)+y)\rho_2(av)\,dv. \]
Note (by elementary calculus) that $|x|^{p} \le 2^{p-1}(|x-v|^{p} + |v|^{p})$, and so
\[ |x|^{p} \rho_1\ast\rho_2(ax+y) \le 2^{p-1}a^n \left[\int |x-v|^{p}\rho_1(a(x-v)+y)\rho_2(av)\,dv + \int \rho_1(a(x-v)+y)|v|^{p}\rho(av)\,dv\right]. \]
In the first term, we have $|x-v|^{p}\rho_1(a(x-v)+y) \le C^p_{\rho_1}(a,|y|) \rho_1(x-v)$ and $\rho_2(av) \le C^0_{\rho_2}(a,0)$, and so
\[ \int |x-v|^{p}\rho_1(a(x-v)+y)\rho_2(av)\,dv \le C^p_{\rho_1}(a,|y|)\cdot C^0_{\rho_2}(a,0)\, \rho_1\ast\rho_2(x). \]
In the second term, we have $\rho_1(a(x-v)+y) \le C^0_{\rho_1}(a,|y|) \rho_1(x-v)$ and $|v|^p\rho(av) \le C^p_{\rho_2}(a,0)$, and so
\[ \int \rho_1(a(x-v)+y)|v|^p\rho(av)\,dv \le C^0_{\rho_1}(a,|y|)\cdot C^p_{\rho_2}(a,0)\,\rho_1\ast\rho_2(x). \]
All together, for any $s\ge |y|$, this gives
\begin{equation} \label{eq conv E.E.R. 2} C^p_{\rho_1\ast\rho_2}(a,s) \le 2^{p-1}a^n\left[ C^p_{\rho_1}(a,s)\cdot C^0_{\rho_2}(a,0) + C^0_{\rho_1}(a,s)\cdot C^p_{\rho_2}(a,0)\right] \end{equation}
which is finite since both $\rho_1$ and $\rho_2$ are Euclidean exponential type $p$ (and thus also Euclidean regular).  Equations (\ref{eq conv E.E.R. 1}) and (\ref{eq conv E.E.R. 2}) prove the proposition.
\end{proof}



\subsection{Continuity of the Dilated Convolution} \label{sect continuity dilated convolution}

\noindent One easy consequence of Definition \ref{def Euclidean regular} is that the operation $f\mapsto f_r$ is bounded on $L^p$.

\begin{lemma} \label{lemma contraction bounded} Let $\mu$ be a Euclidean regular probability measure, let $p>0$, and let $r\in(0,1)$.  Then
\[ \|f_r\|_{L^p(\mu)} \le r^{-n/p}\,C_\mu\left(\textstyle{\frac1r},0\right)^{1/p}\,\|f\|_{L^p(\mu)}. \]
\end{lemma}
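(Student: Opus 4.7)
The plan is to reduce the bound to the defining inequality of the constant $C_\mu(1/r,0)$ via a simple change of variables. Concretely, I would start from
\[ \|f_r\|_{L^p(\mu)}^p = \int |f(rx)|^p \rho(x)\,dx, \]
and substitute $y=rx$, so that $dx = r^{-n}\,dy$ and $x = y/r$. This transforms the integral into
\[ \|f_r\|_{L^p(\mu)}^p = r^{-n}\int |f(y)|^p \rho(y/r)\,dy. \]

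Next I would invoke the Euclidean regular hypothesis in the form of the constant from equation (\ref{eq E.R. constant}). Since $r\in(0,1)$, the parameter $a := 1/r$ satisfies $a>1$, so Definition \ref{def Euclidean regular} applied with $s=0$ and $p=0$ gives
\[ \rho(y/r) = \rho(ay) \le C_\mu^0(a,0)\,\rho(y) = C_\mu\!\left(\tfrac{1}{r},0\right)\rho(y) \]
for every $y\in\R^n$. (Note that $C_\mu(1/r,0)$ is finite precisely because $\mu$ is Euclidean regular, even though $1/r$ need not be close to $1$; condition (\ref{E.R.1}) with $p=0$ yields finiteness at every $a>1$.) Substituting this pointwise inequality into the transformed integral, I obtain
\[ \|f_r\|_{L^p(\mu)}^p \le r^{-n}\,C_\mu\!\left(\tfrac{1}{r},0\right) \int |f(y)|^p \rho(y)\,dy = r^{-n}\,C_\mu\!\left(\tfrac{1}{r},0\right)\,\|f\|_{L^p(\mu)}^p. \]

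Taking $p$-th roots yields the claimed inequality. There is no real obstacle: the proof is just a change of variables together with the defining growth condition. The only point worth flagging is that $p$ is permitted to be less than $1$, but since we never invoke Minkowski's or any norm inequality—only the pointwise bound on $\rho$ and monotonicity of $t \mapsto t^{1/p}$ on $[0,\infty)$—the argument works uniformly for all $p>0$.
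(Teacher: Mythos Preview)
Your proof is correct and follows essentially the same approach as the paper: a change of variables $y=rx$ followed by the pointwise bound $\rho(y/r)\le C_\mu(1/r,0)\rho(y)$ from Definition~\ref{def Euclidean regular}. Your additional remarks on the finiteness of $C_\mu(1/r,0)$ for arbitrary $r\in(0,1)$ and on the validity for $p<1$ are accurate and slightly more explicit than the paper's version.
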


\begin{proof} We simply change variables $u=rx$ and use Definition \ref{def Euclidean regular}:
\[ \int |f_r(x)|^p\mu(dx) = \int |f(rx)|^p \rho(x)\,dx = r^{-n}\int |f(u)|^p \rho(x/r)\,dx \le r^{-n} C_\mu\left(\textstyle{\frac1r},0\right)\,\int |f(u)|^p\rho(x)\,dx. \]
\end{proof}

\begin{remark} By condition (\ref{E.R.2}) of Definition \ref{def Euclidean regular}, the constant in Lemma \ref{lemma contraction bounded} is uniformly bounded for $r\in(\e,1]$ for any $\e>0$; that is, there is a uniform (independent of $r$) constant $C_\e$ so that, for $r\in(\e,1]$, $\|f_r\|_{L^p(\mu)} \le C_\e \|f\|_{L^p(\mu)}$. \end{remark}

\medskip

The next proposition shows that, under the assumptions of Definition \ref{def Euclidean regular}, the dilated convolution operation is indeed bounded on $L^p$.
As usual, the conjugate exponent $p'$ to $p\in[1,\infty)$ is defined by $\frac{1}{p}+\frac{1}{p'}=1$. 

\begin{proposition} \label{prop Lp bound} Let $\mu$ be a Euclidean regular probability measure on $\R^n$.  Let $p\in[1,\infty)$, and let $\varphi\in C_c^\infty$ be a test function. Then the {dilated convolution} operation $f\mapsto (f\ast\varphi)_r$ is bounded on $L^p(\mu)$ for each $r\in(0,1)$.  Precisely, if $K = \supp\varphi$ and $s = \sup\{|w|\,;\,w\in K\}$, then
\[ \|(f\ast \varphi)_r\|_{L^p(\mu)} \le r^{-n/p} C_\mu(\textstyle{\frac{1}{r}},\frac{s}{r})^{1/p}\,\mathrm{Vol}(K)^{1/p}\|\varphi\|_{L^{p'}(K)}\; \|f\|_{L^p(\mu)}, \]
where $C_\mu$ is the constant defined in (\ref{eq E.R. constant}).
\end{proposition}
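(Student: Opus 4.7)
The plan is to prove the proposition by a direct combination of Hölder's inequality, Fubini's theorem, a linear change of variable, and the Euclidean regular estimate for $\rho$ encoded in (\ref{eq E.R. constant}). No subharmonicity or smoothness beyond what is already built into $\varphi \in C_c^\infty$ is needed; this is purely an $L^p$-boundedness statement.

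First, I would unwind the definitions: $(f\ast\varphi)_r(x) = \int_K f(rx-w)\varphi(w)\,dw$, where $K = \supp\varphi$. Applying Hölder's inequality in the $w$-variable (with exponents $p$ and $p'$) against $1_K \varphi$ yields
\[
|(f\ast\varphi)_r(x)|^p \le \|\varphi\|_{L^{p'}(K)}^p \int_K |f(rx-w)|^p\,dw.
\]
Integrating this against $\rho(x)\,dx$ and swapping the order of integration by Fubini, the task reduces to bounding the inner integral
\[
\int_{\R^n} |f(rx-w)|^p\,\rho(x)\,dx
\]
uniformly in $w\in K$.

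The second step is the change of variable $u = rx - w$, so $x = (u+w)/r$ and $dx = r^{-n}du$. This rewrites the inner integral as
\[
r^{-n}\int_{\R^n} |f(u)|^p\, \rho\!\left(\tfrac{1}{r}u + \tfrac{w}{r}\right)\,du.
\]
Setting $a = 1/r > 1$ and $y = w/r$, the hypothesis that $\mu$ is Euclidean regular applied to (\ref{eq E.R. constant}) with $p=0$ gives $\rho(au+y) \le C_\mu(\tfrac{1}{r},|y|)\,\rho(u)$ pointwise. Since $|w|\le s$ forces $|y| = |w/r| \le s/r$, and since $C_\mu(a,\cdot)$ is increasing, we obtain the uniform bound $\rho((u+w)/r) \le C_\mu(\tfrac{1}{r},\tfrac{s}{r})\,\rho(u)$ for all $w\in K$. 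Thus the inner integral is bounded by $r^{-n} C_\mu(\tfrac{1}{r},\tfrac{s}{r})\,\|f\|_{L^p(\mu)}^p$.

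Substituting back and performing the trivial outer integral $\int_K dw = \mathrm{Vol}(K)$, we get
\[
\|(f\ast\varphi)_r\|_{L^p(\mu)}^p \le \|\varphi\|_{L^{p'}(K)}^p \cdot \mathrm{Vol}(K)\cdot r^{-n} C_\mu(\tfrac{1}{r},\tfrac{s}{r})\,\|f\|_{L^p(\mu)}^p,
\]
and taking $p$-th roots gives exactly the stated inequality. I do not anticipate any real obstacle: the argument is essentially a weighted Young-type inequality, and the only point to be careful about is that the shift $w$ in the convolution becomes a shift $w/r$ of size at most $s/r$ after the change of variable, which is precisely why $C_\mu$ is evaluated at $(\tfrac{1}{r},\tfrac{s}{r})$ rather than $(\tfrac{1}{r},s)$.
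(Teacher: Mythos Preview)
Your proof is correct and follows essentially the same approach as the paper: H\"older's inequality on the inner integral, Fubini, the change of variables $u=rx-w$, and then the Euclidean regular estimate $\rho(\tfrac{1}{r}u+\tfrac{w}{r})\le C_\mu(\tfrac{1}{r},\tfrac{s}{r})\rho(u)$. You even flag the same subtlety the paper emphasizes, namely that the shift becomes $w/r$ after the change of variable, which accounts for the second argument $s/r$ in $C_\mu$.
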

\begin{proof} 
Denote by $K$ the support of $\varphi$.  By definition,
\[ \|(f\ast\varphi)_r\|_{L^p(\mu)}^p = \int_{\R^n} \left|\int_K f(rx-y)\varphi(y)\,dy\right|^p \,\rho(x)\,dx.\]
We immediately estimate the internal integral using H\"older's inequality:
\[ \left|\int_K f(rx-y)\varphi(y)\,dy\right|^p \le \int_K |f(rx-y)|^p\,dy\cdot \|\varphi\|_{L^{p'}(K)}^p, \]
which is finite since the first integral is the $p$-th power of the  $L^p$-norm of $f$ restricted to the compact set $rx-K$.  Hence,
\begin{equation} \label{eq Lp bound 1} \|(f\ast\varphi)_r\|_{L^p(\mu)}^p \le \|\varphi\|_{L^{p'}(K)}^p \int_{\R^n} \int_K |f(rx-y)|^p dy\, \rho(x)\,dx. \end{equation}
We apply Fubini's theorem to the double integral, which is therefore equal to
\begin{equation}\label{eq Lp bound 2} \int_K \int_{\R^n} |f(rx-y)|^p \rho(x)\,dx\,dy
= \int_K r^{-n}\int_{\R^n} |f(u)|^p \rho\left(\frac{u+y}{r}\right)\,du\,dy \end{equation}
where we have made the change of variables $u=rx-y$ in the internal integral.  
By assumption, $\rho$ is Euclidean regular, and so we have
\begin{equation}\label{eq Lp bound 3}
\rho(\textstyle{\frac{1}{r}}u + \textstyle{\frac{1}{r}}y) \le C_\mu(\textstyle{\frac{1}{r}},\frac{s}{r})\,\rho(u), \quad y\in K. \end{equation}
where $s = \sup\{|w|\,;\,w\in K\}$.  Substituting (\ref{eq Lp bound 3}) into (\ref{eq Lp bound 2}), we see that (\ref{eq Lp bound 1}) yields
\[ \|(f\ast\varphi)_r\|_{L^p(\mu)}^p \le r^{-n}\,C_\mu(\textstyle{\frac{1}{r}},\frac{s}{r})\, \mathrm{Vol}(K)\, \|\varphi\|_{L^{p'}(K)}^p \, \displaystyle{\int |f(u)|^p\rho(u)\,du}. \]
This completes the proof.  \end{proof}

\begin{remark} \label{remark pp' invariant} The explicit constant in Proposition \ref{prop Lp bound} appears to depend strongly on the support set of $\varphi$, but it does not.  Indeed, it is easy to check that the standard rescaling of a test function, $\varphi^s(x) = s^{-n}\varphi(x/s)$, which preserves total mass, also preserves the $\varphi$-dependent quantity above; to be precise, $\mathrm{Vol}(\supp\varphi^s)\|\varphi^s\|^p_{L^{p'}(\R^n)}$ does not vary with $s$.  In addition, the constant $C_\mu(1/r,s/r)$ is well-behaved as $s$ shrinks (indeed, it only decreases).  It is for this reason that the proposition allows us to use the dilated convolution operation with an approximate identity sequence in what follows. \end{remark}

The use of Proposition \ref{prop Lp bound} is that it allows us to approximate an $L^p$ function by smoother $L^p$ functions, along a path through $\LSH$ functions.  To prove this, we first require the following continuity lemma.

\begin{lemma} \label{continuity lemma} Let $\mu$ be a Euclidean regular probability measure, and let $r\in(0,1)$.  Then for any $f\in L^p(\mu)$, the map $T_f\colon\R^n\to L^p(\mu)$ given by $[T_f(y)](x) = f_r(x-y)$ is continuous.
\end{lemma}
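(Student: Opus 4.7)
The plan is a standard three-step argument: uniform boundedness of $T_f$ on compact sets of $y$, continuity on a dense subspace, and a $3\varepsilon$-argument to conclude. The key leverage is that the Euclidean regular hypothesis on $\mu$ gives exactly the estimate needed to control translations in $L^p(\mu)$ after dilation, which is the only nontrivial point compared to the Lebesgue-measure case.

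First, I would establish a local uniform bound. Changing variables $u = r(x-y)$ in the defining integral,
\[ \|T_f(y)\|_{L^p(\mu)}^p = \int |f(r(x-y))|^p\rho(x)\,dx = r^{-n}\int |f(u)|^p\,\rho\!\left(\tfrac{1}{r}u + y\right)du. \]
Applying the Euclidean regular condition (\ref{E.R.1}) with $p=0$, $a=1/r>1$, and shift $y$ gives $\rho(u/r + y) \le C_\mu(1/r,|y|)\rho(u)$ pointwise in $u$, so that
\[ \|T_f(y)\|_{L^p(\mu)}^p \le r^{-n}\,C_\mu(1/r,|y|)\,\|f\|_{L^p(\mu)}^p. \]
Monotonicity of $s\mapsto C_\mu(1/r,s)$ then yields a bound $\|T_f(y)\|_p \le K(r,s)\|f\|_p$ uniform for $|y|\le s$, and shows that $f\mapsto T_f(y)$ is a bounded linear operator on $L^p(\mu)$.

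Next, I would verify continuity of $T_f$ when $f$ lies in the dense subspace $C_c(\RR^n)\subset L^p(\mu)$. For such $f$ with $\supp f\subset B(0,R)$, every function $T_f(y')$ with $|y'|\le s$ is supported in the fixed compact ball $B(0,s+R/r)$, so $\mu$ restricted to that ball is finite. Uniform continuity of $f$ gives $\sup_x|f(r(x-y_n)) - f(r(x-y))|\to 0$ whenever $y_n\to y$, and integrating the $p$-th power of this uniform bound against $\mu$ over the fixed compact support yields $\|T_f(y_n)-T_f(y)\|_{L^p(\mu)}\to 0$.

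Finally, I would combine the two steps by a $3\varepsilon$-argument. Given $f\in L^p(\mu)$, $y_0\in\RR^n$, and $\varepsilon>0$, fix $s>|y_0|+1$ and choose $g\in C_c(\RR^n)$ with $\|f-g\|_p < \varepsilon/(3K(r,s))$. For $|y|\le s$, linearity and the uniform bound give $\|T_f(y)-T_g(y)\|_p \le K(r,s)\|f-g\|_p < \varepsilon/3$, and similarly for $y=y_0$; by the second step, $\|T_g(y)-T_g(y_0)\|_p<\varepsilon/3$ once $y$ is close enough to $y_0$. The triangle inequality then gives $\|T_f(y)-T_f(y_0)\|_p<\varepsilon$, which is the required continuity at $y_0$.

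The main obstacle, such as it is, lies in step one: invoking (\ref{E.R.1}) with the \emph{correct} range of shifts so that the bound is genuinely uniform for $y$ in a compact neighborhood. This is exactly the reason that $C_\mu(a,s)$ was defined with the two arguments $(a,s)$ in (\ref{eq E.R. constant}); once that has been unpacked, the remainder is a routine density argument using that $C_c(\RR^n)$ is dense in $L^p(\mu)$ (which holds because $\mu$ is a finite Borel measure absolutely continuous with respect to Lebesgue).
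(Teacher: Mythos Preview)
Your proof is correct and follows essentially the same route as the paper's: the change of variables giving $\|T_f(y)\|_p^p \le r^{-n}C_\mu(1/r,|y|)\|f\|_p^p$, approximation by a function in $C_c(\R^n)$, and a three-term triangle inequality are exactly the paper's ingredients. The only cosmetic difference is in the middle term: you use uniform continuity and a fixed compact support, while the paper invokes pointwise convergence plus dominated convergence against the probability measure $\mu$; both are standard and valid.
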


\begin{proof} First note that, by the change of variables $u=rx-ry$,
\[ \|T_f(y)\|_{L^p(\mu)}^p = \int |f(rx-ry)|^p\rho(x)\,dx = r^{-n} \int |f(u)|^p \rho\left(\textstyle{\frac{1}{r}}u+y\right)\,du, \]
and the latter is bounded above by $r^{-n} C_\mu(\frac{1}{r},|y|)\, \|f\|_{L^p(\mu)}^p$, showing that the range of $T_f$ is truly in $L^p(\mu)$ for $y\in\R^n$.  Now, fix $\e>0$ and let $\psi\in C_c(\R^n)$ be such that $\|f-\psi\|_{L^p(\mu)} < \e$.  Let $(y_k)_{k=1}^\infty$ be a sequence in $\R^n$ with limit $y_0$.  Then
\[ \|T_f(y_k)-T_f(y_0)\|_{L^p(\mu)} \le
\|T_f(y_k) - T_\psi(y_k)\|_{L^p(\mu)} + \|T_\psi(y_k)-T_\psi(y_0)\|_{L^p(\mu)}
+ \|T_\psi(y_0)-T_f(y_0)\|_{L^p(\mu)}. \]
The first and last terms are simply $T_{\psi-f}(y_k)$ (with $k=0$ for the last term), and so we have just proved that
\[ \|T_{\psi-f}(y_k)\|_{L^p(\mu)} \le r^{-n/p} C_\mu\left(\textstyle{\frac{1}{r}},|y_k|\right)^{1/p} \|\psi-f\|_{L^p(\mu)} < r^{-n/p} C_\mu\left(\textstyle{\frac{1}{r}},|y_k|\right)^{1/p}\e. \]
Moreover, there is a constant $s$ so that $|y_k|\le s$ for all $k$, and since $C_\mu(a,s)$ is an increasing function of $s$, it follows that
\[ \|T_f(y_k)-T_f(y_0)\|_{L^p(\mu)} \le   \|T_\psi(y_k)-T_\psi(y_0)\|_{L^p(\mu)} + 2r^{-n/p} C_\mu\left(\textstyle{\frac{1}{r}},s\right)^{1/p}\e. \]
For each $x$, $(T_\psi(y_k)(x) - T_\psi(y_0)(x) = \psi(rx-ry_k)-\psi(rx-ry_0)$ converges to $0$ since $ry_k\to ry_0$ and $\psi$ is continuous.  In addition, $\psi_r$ is compactly supported and continuous, so it is uniformly bounded.  Since $\mu$ is a probability measure, it now follows that $\|T_\psi(y_k)-T_\psi(y_0)\|_{L^p(\mu)}\to 0$ as $y_k\to y_0$, and the lemma follows by letting $\e\downarrow 0$.
\end{proof}

\begin{corollary} \label{cor convolution} Let $\mu$ be a Euclidean regular probability measure, and let $r\in(0,1)$.  Then for any $f\in L^p(\mu)$, and $\varphi_k$ an approximate identity sequence ($\varphi_k\in C_c^\infty(\R^n)$ with $\int \varphi_k(x)\,dx = 1$ and $\supp\varphi_k \downarrow\{0\}$),
\[ \|f_r\ast\varphi_k-f_r\|_{L^p(\mu)}\to 0 \quad \text{as} \quad k\to\infty. \]
\end{corollary}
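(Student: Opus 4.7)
The plan is to reduce the assertion to the continuity statement of Lemma \ref{continuity lemma} via Minkowski's integral inequality, which is the natural way to pass an $L^p$-norm through a convolution against an $L^1$-kernel.

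First, using the fact that $\int \varphi_k(y)\,dy = 1$, I would write
\[ (f_r\ast\varphi_k)(x) - f_r(x) = \int_{\R^n} \bigl[ f_r(x-y) - f_r(x) \bigr]\,\varphi_k(y)\,dy = \int_{\R^n} \bigl[T_f(y)(x) - T_f(0)(x)\bigr]\,\varphi_k(y)\,dy, \]
in the notation of Lemma \ref{continuity lemma}. Assuming $p\ge 1$ (which is the operative range in Proposition \ref{prop Lp bound}) and taking $L^p(\mu)$-norms in $x$, Minkowski's integral inequality yields
\[ \|f_r\ast\varphi_k - f_r\|_{L^p(\mu)} \le \int_{\R^n} \|T_f(y)-T_f(0)\|_{L^p(\mu)}\,|\varphi_k(y)|\,dy. \]

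Next, I would invoke Lemma \ref{continuity lemma}: the map $y\mapsto T_f(y)\in L^p(\mu)$ is continuous, so in particular continuous at $0$. Fix $\e>0$ and choose $\delta>0$ so that $\|T_f(y)-T_f(0)\|_{L^p(\mu)}<\e$ whenever $|y|\le \delta$. Because $\supp\varphi_k\downarrow\{0\}$, there is $k_0$ with $\supp\varphi_k\subseteq\{|y|\le\delta\}$ for all $k\ge k_0$. For such $k$,
\[ \|f_r\ast\varphi_k - f_r\|_{L^p(\mu)} \le \e \int_{\R^n}|\varphi_k(y)|\,dy \le C\e, \]
using that $\int|\varphi_k|\,dy$ is uniformly bounded for a standard approximate identity (and equals $1$ if the $\varphi_k$ are taken nonnegative). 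Letting $\e\downarrow 0$ gives the claim.

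The only real subtlety, and hence the main obstacle to verify, is the validity of Minkowski's integral inequality in this setting, which requires joint measurability of $(x,y)\mapsto f_r(x-y)\varphi_k(y)$ on $\R^n\times\R^n$ with respect to $dx\otimes dy$ (and hence with respect to $\mu(dx)\otimes dy$ since $\mu$ has a density). Since $f$ is measurable on $\R^n$ and $(x,y)\mapsto rx-y$ is continuous, the composition is measurable, so this is automatic. Everything else — local integrability of the integrand and the passage to the limit — follows from the continuity provided by Lemma \ref{continuity lemma} together with the Euclidean regularity bound $\|T_f(y)\|_{L^p(\mu)}\le r^{-n/p}C_\mu(1/r,|y|)^{1/p}\|f\|_{L^p(\mu)}$, which is already recorded in its proof and keeps the integrand uniformly controlled on the shrinking supports of $\varphi_k$.
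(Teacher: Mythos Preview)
Your proof is correct and takes a genuinely different route from the paper. The paper argues via a three-term approximation: fix $\psi\in C_c(\R^n)$ close to $f$ in $L^p(\mu)$, split $\|f_r\ast\varphi_k-f_r\|$ as $\|(f_r-\psi_r)\ast\varphi_k\|+\|\psi_r\ast\varphi_k-\psi_r\|+\|\psi_r-f_r\|$, control the two outer terms uniformly in $k$ using the boundedness of the dilated convolution from Proposition~\ref{prop Lp bound} (after the rewrite $f_r\ast\varphi_k=r^{-n}(f\ast\tilde\varphi_k)_r$), and handle the middle term by pointwise convergence and dominated convergence for the compactly supported continuous $\psi_r$. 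Your argument instead passes the $L^p(\mu)$-norm inside the convolution via Minkowski's integral inequality and then invokes the continuity of $y\mapsto T_f(y)$ at $y=0$ from Lemma~\ref{continuity lemma} directly. This is shorter and more conceptual: since the three-term approximation is already buried inside the proof of Lemma~\ref{continuity lemma}, you are effectively recognizing that the lemma does all the work and avoiding a repetition of its structure. The paper's route, on the other hand, makes the dependence on Proposition~\ref{prop Lp bound} and Remark~\ref{remark pp' invariant} explicit. Both arguments operate under the same implicit restriction $p\ge 1$.
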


\begin{proof} Fix $\e>0$ and let $\psi\in C_c(\R^n)$ be such that $\|f-\psi\|_{L^p(\mu)}<\e$.  The standard $3$ term inequality in this case is
\begin{equation} \label{eq 3 term 2} \|f_r\ast\varphi_k-f_r\|_{L^p(\mu)} \le \|(f_r-\psi_r)\ast\varphi_k\|_{L^p(\mu)} + \|\psi_r\ast\varphi_k-\psi_r\|_{L^p(\mu)} + \|\psi_r - f_r\|_{L^p(\mu)}. \end{equation}
 Following the proof of Lemma \ref{continuity lemma}, we have $\|f_r-\psi_r\|_{L^p(\mu)} \le r^{-n/p}C_\mu(1/r,0)^{1/p}\e$, and from condition (\ref{E.R.2}) of Definition \ref{def Euclidean regular} this is a uniformly bounded constant times $\e$ for $r$ away from $0$.  Also, note that
\[ f_r\ast\varphi_k(x) = \int f_r(x-y)\varphi_k(y)\,dy = \int f(rx-ry)\varphi_k(y)\,dy = r^{-n}\int f(rx-u)\varphi_k(u/r)\,du; \]
that is to say, $f_r\ast\varphi_k = r^{-n}(f\ast\tilde\varphi_k)_r$, where we set $\tilde\varphi_k=(\varphi_k)_{1/r}$.  Hence,
\[ \begin{aligned} 
\|(f-\psi)_r\ast\tilde\varphi_k\|_{L^p(\mu)} &= r^{-n} \|((f-\psi)\ast\tilde\varphi_k)_r\|_{L^p(\mu)}  \\
&\le r^{-n} r^{-n/p} C_\mu\left(\textstyle{\frac{1}{r}},\frac{s_k}{r}\right)^{1/p}\mathrm{Vol}(\supp\tilde\varphi_k)^{1/p}\|\tilde\varphi_k\|_{L^{p'}(\R^n)}\cdot \|f-\psi\|_{L^p(\mu)}  \end{aligned} \]
by Proposition \ref{prop Lp bound}, where $s_k = \sup\{|w|\,;\,w\in\supp\varphi_k\}$.  Since $C_\mu(1/r,\frac{s}{r})$ is increasing in $s$, this constant is uniformly bounded as $k\to\infty$.  What's more, cf.\ Remark \ref{remark pp' invariant}, the product $\mathrm{Vol}(\supp\tilde\varphi_k)^{1/p}\|\tilde\varphi_k\|_{L^{p'}(\R^n)}$ can also be made constant with $k$ (for example by choosing $\varphi_k(x) = k^n \varphi(kx)$ for some fixed unit mass $C_c^\infty$ test-function $\varphi$).  The result is that both the first and last terms in (\ref{eq 3 term 2}) are uniformly small as $k\to\infty$.  Thus, we need only show that $\psi_r\ast\varphi_k\to\psi_r$ in $L^p(\mu)$. The quantity in question is the $p$th root of
\begin{equation} \label{eq convol p norm 1} \int \left|\int\psi_r(x-y)\varphi_k(y)\,dy - \psi_r(x)\right|^p\,\mu(dx)
= \int \left|\int_{K_k}[\psi_r(x-y)-\psi_r(x)]\varphi_k(y)\,dy\right|^p\,\mu(dx), \end{equation}
where we have used the fact that $\varphi_k$ is a probability density; here $K_k$ denotes the support of $\varphi_k$.  Since $\psi_r$ is bounded, we may make the blunt estimate that the quantity in (\ref{eq convol p norm 1}) is
\[ \le \int  \sup_{y\in K_k} |\psi_r(x-y)-\psi_r(x)|^p \left|\int_{K_k} \varphi_k(y)\,dy\right|^p\,\mu(dx)
= \int  \sup_{y\in K_k} |\psi_r(x-y)-\psi_r(x)|^p\,\mu(dx). \]
Since $\psi_r$ is continuous and $K_k$ is compact, there is a point $y_k\in K_k$ such that the supremum is achieved at $y_k$: $\sup_{y\in K_k} |\psi_r(x-y)-\psi_r(x)|^p = |\psi_r(x-y_k)-\psi_r(x)|^p$.  As $k\to\infty$, the support $K_k$ of $\psi_k$ shrinks to $\{0\}$, and so $y_k\to 0$.   The function $|\psi_r(x-y_k)-\psi_r(x)|^p$ is continuous in $x$, and so converges to $0$ pointwise as $y_k\to 0$.  It therefore follows from the dominated convergence theorem that $\|\psi_r\ast\varphi_k-\psi_r\|_{L^p(\mu)}\to 0$, completing the proof.
\end{proof}

We will now use Proposition \ref{prop Lp bound} and Corollary \ref{cor convolution} to prove our main approximation theorem: that $L^p_E(\mu)$ is dense in $L^p(\mu)$ {\em through} log-subharmonic functions.




\subsection{The Proof of Theorem \ref{thm Sobolev density}} \label{sect proof Sobolev density}

\begin{proof}[Proof of Theorem \ref{thm Sobolev density}] The basic idea of the proof is as follows: approximate a function $f\in \LSH\cap L^p(\mu)$ by $(f\ast\varphi)_r$, and let $\varphi$ run through an approximate identity sequence and $r$ tend to $1$.  We show that the dilated convolution $(f\ast\varphi)_r$ is in $\LSH\cap L^p_E(\mu)$, and that these may be used to approximate $f$ in $L^p$-sense.

\medskip

\noindent {\em Part 1: $(f\ast\varphi)_r$ is in $\LSH\cap L^p_E(\mu)$.}  Let $\varphi\in C_c^\infty(\R^n)$ be a non-negative test function.  Lemma \ref{lemma convolution preserves LSH} shows that $f\ast\varphi$ is $C^\infty$ and $\LSH$.  It is elementary to verify that the cone $C^\infty\cap \LSH$ is invariant under dilations $g\mapsto g_r$; hence the dilated convolution $(f\ast\varphi)_r$ is $C^\infty$ and $\LSH$.  For fixed $r<1$, Proposition \ref{prop Lp bound} shows that $(f\ast\varphi)_r$ is in $L^p(\mu)$, since $f\in L^p(\mu)$.  We must now apply the differential operator $E$.  Note that $(f\ast\varphi)_r$ is $C^\infty$, and so
\[ E[(f\ast\varphi)_r](x) = x\cdot\nabla [(f\ast\varphi)_r](x) = \int rx\cdot\nabla\varphi\,(rx-y)f(y)\,dy. \]
Decomposing $rx=(rx-y)+y$, we break this up as two terms
\begin{equation} \label{eq two terms} E[(f\ast\varphi)_r](x) = \int (rx-y)\cdot\nabla\varphi\,(rx-y)f(y)\,dy + \int y\cdot\nabla\varphi(rx-y)f(y)\,dy. \end{equation}
The first term is just $(f\ast E\varphi)_r(x)$, and since $E\varphi$ is also $C_c^\infty(\R^n)$, Proposition \ref{prop Lp bound} bounds the $L^p$-norm of this term by the $L^p$-norm of $f$.  Hence, it suffices to show that the second term in (\ref{eq two terms}) defines an $L^p(\mu)$-function of $x$.  We now proceed analogously to the proof of Proposition \ref{prop Lp bound}.  Changing variables $u=rx-y$ for fixed $x$ in the internal integral and then using H\"older's inequality,

\[ \begin{aligned} &\int_{\R^n} \left| \int_{\R^n} y\cdot \nabla\varphi\,(rx-y)f(y)\,dy\right|^p \rho(x)\,dx \\
=  &\int_{\R^n} \left| \int_{K} (rx-u)\cdot \nabla\,\varphi(u)f(rx-u)\,du\right|^p \rho(x)\,dx \\
\le &\int_{\R^n} \left(\int_K |rx-u|^p\,|f(rx-u)|^p\,du\right) \, \left(\int_K |\nabla\varphi\,(u)|^{p'}\,dy\right)^{p/p'}\!\rho(x)\,dx,
\end{aligned} \]
where $K=\supp\varphi$.  Note that $\|\nabla\varphi\|_{p'}<\infty$ is a constant independent of $f$.  So we must consider the double integral, to which we apply Fubini's theorem,
\[ \int_{\R^n} \left(\int_K |rx-u|^p |f(rx-u)|^p du\right)\rho(x)\,dx
= \int_K\left(\int_{\R^n} |rx-u|^p |f(rx-u)|^p \rho(x)\,dx\right)du. \]
Now we change variables $v=rx-u$ for fixed $u$ in the internal integral, to achieve
\begin{equation} \label{eq LpE 1} \int_K \left(\int_{\R^n} |v|^p |f(v)|^p \rho\left(\frac{v+u}{r}\right)\,r^{-n}\,dv\right)\,du. \end{equation}
Finally, we utilize the assumption that $\rho$ is exponential type $p$, and so there is a constant $C(p,r,K)$ so that $|v|^p\rho(\frac{v+u}{r}) \le C(p,r,K)\rho(u)$ for $u\in K$.  Hence the integral in (\ref{eq LpE 1}) is bounded above by $C(p,r,K)r^{-n}\mathrm{Vol}(K)$ times the finite norm $\int |f|^p\,d\mu$, which demonstrates that $E[(f\ast\varphi)_r]$ is in $L^p(\mu)$.

\medskip

\noindent {\em Part 2: $(f\ast\varphi)_r$ approximates $f$ in $L^p(\mu)$.}  Let $\varphi_k$ be an approximate identity sequence.  Note by simple change of variables that $(f\ast\varphi_k)_r = f_r \ast (r^n\varphi_k)_r$, and that $(r^n\varphi_k)_r$ is also an approximate identity sequence.  Since $f_r\in L^p(\mu)$, by Lemma \ref{lemma contraction bounded}, it follows from Corollary \ref{cor convolution} that $(f\ast\varphi_k)_r\to f_r$, $k\rightarrow\infty$, in $L^p(\mu)$.  We must now show that $f_r\to f$ in $L^p(\mu)$ as $r\uparrow 1$.  For this purpose, once again fix $\e>0$ and choose a $\psi\in C_c(\R^n)$ so that $\|f-\psi\|_{L^p(\mu)}<\e$.  Then
\begin{equation} \label{eq last 3 term} \|f-f_r\|_{L^p(\mu)} \le \|f-\psi\|_{L^p(\mu)} +\|\psi-\psi_r\|_{L^p(\mu)} + \|\psi_r-f_r\|_{L^p(\mu)}. \end{equation}
The first term is $<\e$, and changing variables the last term is
\[ \begin{aligned} \|\psi_r-f_r\|_{L^p(\mu)}^p = \int |\psi(rx)-f(rx)|^p\rho(x)\,dx &= r^{-n}\int |\psi(u)-f(u)|^p \rho(u/r)\,du \\
&\le r^{-n}C_\mu\left(\textstyle{\frac1r},0\right)\int|\psi-f|^p\,d\mu.
\end{aligned}  \]
Here we have used the fact that $\mu$ is Euclidean regular.  Note that, by condition (\ref{E.R.2}) of Definition \ref{def Euclidean regular}, the constant appearing here is uniformly bounded by, say, $C$, for $r\in(\frac12,1]$. Thence, the last term in (\ref{eq last 3 term}) is bounded above by $C^{1/p}\e$ and is also uniformly small.  Finally, the middle term tends to $0$ as $r\uparrow 1$ since $\psi_r\to\psi$ pointwise and the integrand is uniformly bounded.  Letting $\e$ tend to $0$ completes the proof.
\end{proof}

\section{The Intrinsic Equivalence of (\ref{sLSI}) and (\ref{sHC})} \label{sect SHC ==> LSI}

In this section, we prove Theorem \ref{thm sLSI <=> sHC}: if a measure $\mu$  is sufficiently Euclidean regular (satisfying the conditions of Definition \ref{def Euclidean regular}), and if {\em $\mu$ is invariant under rotations}, then $\mu$ satisfies a strong log-Sobolev inequality precisely
when
it satisfies strong hypercontractivity.  It will be useful to fix the following notation.

\begin{notation} \label{notation q} Let $c>0$ be a fixed constant, let $\mu$ be a measure on $\R^n$, and let $f$ be a function on $\R^n$.
\begin{enumerate}
\item For $r\in(0,1]$, let $q = q(r)$ denote the function
\[ q(r) = r^{-2/c}. \]
Note that $q\in C^\infty(0,1]$, is decreasing, and $q(1)=1$.

\item Define a function $\alpha_{f,\mu}\colon (0,1]\to[0,\infty)$ by
\[ \alpha_{f,\mu}(r) \equiv \|f_r\|_{L^{q(r)}(\mu)} = \left(\int |f(rx)|^{q(r)}\,\mu(dx)\right)^{1/q(r)}. \]
When the function $f$ and measure $\mu$ are clear from context, we denote $\alpha_{f,\mu} = \alpha$.
\end{enumerate} \end{notation}

\noindent We begin with the following general statement.

\begin{lemma} \label{lemma derivative} Suppose $\mu$ is a Euclidean regular probability measure.  Let $q_0>1$, and let $f\ge0$ be in $L^{q_0}(\mu)\cap C^\infty(\R^n)$. Let $\e\in(0,1)$, and suppose there are functions $h_1,h_2\in L^1(\mu)$ such that for all $r\in(\e,1]$,
\begin{equation} \label{eq terms to bound} |f(rx)^{q(r)}\log f(rx)| \le h_1(x), \quad |f(rx)^{q(r)-1}Ef(rx)| \le h_2(x) \quad a.s.[x]. \end{equation}
Then there is $\e'\in(\e,1)$ such that $\alpha = \alpha_{f,\mu}$ is differentiable on $(\e',1]$, and for $r$ in this domain,
\begin{equation} \label{eq derivative 0} \begin{aligned}
\alpha'(r) = \frac{2}{crq(r)}\|f_r\|_{q(r)}^{1-q(r)}\Bigg[ \|f_r\|_{q(r)}^{q(r)} \log \|f_r\|_{q(r)}^{q(r)} &- \int f(rx)^{q(r)} \log f(rx)^{q(r)}\,\mu(dx) \\
&+ \frac{cq(r)}{2}\int f(rx)^{q(r)-1} Ef(rx)\,\mu(dx)\Bigg].
\end{aligned} \end{equation}
\end{lemma}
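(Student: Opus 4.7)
The plan is to work through the intermediate quantity $\beta(r) := \alpha(r)^{q(r)} = \int f(rx)^{q(r)}\,\mu(dx)$, differentiate it under the integral sign using the Dominated Convergence Theorem (with the hypothesized dominators $h_1, h_2$), and then recover $\alpha'(r)$ from $\alpha(r) = \beta(r)^{1/q(r)}$ by the chain rule.

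Step 1 (pointwise derivative of the integrand). Set $\beta_x(r) = f(rx)^{q(r)}$, which is unambiguous since $f\ge 0$ and $q(r)\ge 1$ on $(0,1]$. Since $f\in C^\infty$ and $q\in C^\infty$, a direct logarithmic differentiation combined with $q'(r) = -\tfrac{2q(r)}{cr}$ and $x\cdot\nabla f(rx) = \tfrac1r Ef(rx)$ yields
\[
\partial_r \beta_x(r) \;=\; -\frac{2q(r)}{cr}\,f(rx)^{q(r)}\log f(rx) \;+\; \frac{q(r)}{r}\,f(rx)^{q(r)-1}Ef(rx),
\]
with the convention that $t^{q(r)}\log t\to 0$ at $t=0$.

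Step 2 (DCT for $\beta$). The coefficients $\tfrac{2q(r)}{cr}$ and $\tfrac{q(r)}{r}$ are uniformly bounded on $(\e,1]$ by a constant $C_\e$, so the hypothesis (\ref{eq terms to bound}) gives $|\partial_r\beta_x(r)|\le C_\e(h_1(x)+h_2(x))$, a fixed $L^1(\mu)$ dominator. The Dominated Convergence Theorem then shows that $\beta\in C^1(\e,1]$ with
\[
\beta'(r) \;=\; -\frac{2q(r)}{cr}\int f(rx)^{q(r)}\log f(rx)\,d\mu \;+\; \frac{q(r)}{r}\int f(rx)^{q(r)-1}Ef(rx)\,d\mu,
\]
and in particular $\beta$ is continuous on $(\e,1]$.

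Step 3 (selecting $\e'$ and chain rule). If $f\equiv 0$ $\mu$-a.s.\ the statement is vacuous, so we may assume $\beta(1)=\|f\|_1>0$; by continuity, $\beta(r)>0$ on some interval $(\e',1]\subset(\e,1]$. On this interval $\alpha = \beta^{1/q}$ is $C^1$, and logarithmic differentiation gives
\[
\frac{\alpha'(r)}{\alpha(r)} \;=\; \frac{\beta'(r)}{q(r)\beta(r)} \;-\; \frac{q'(r)\log\beta(r)}{q(r)^2} \;=\; \frac{\beta'(r)}{q(r)\beta(r)} \;+\; \frac{2\log\beta(r)}{crq(r)},
\]
so that
\[
\alpha'(r) \;=\; \frac{2\,\alpha(r)}{crq(r)\beta(r)}\Bigl[\beta(r)\log\beta(r) + \tfrac{cr}{2}\beta'(r)\Bigr].
\]
Inserting the expression for $\beta'(r)$ from Step 2, collapsing the prefactor $q(r)$ into the logarithm (so that $q(r)f(rx)^{q(r)}\log f(rx) = f(rx)^{q(r)}\log f(rx)^{q(r)}$), and using $\alpha(r)/\beta(r) = \|f_r\|_{q(r)}^{1-q(r)}$ together with $\beta(r)\log\beta(r) = \|f_r\|_{q(r)}^{q(r)}\log\|f_r\|_{q(r)}^{q(r)}$, delivers (\ref{eq derivative 0}).

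The only real obstacle is the DCT justification — and it is exactly what the dominators $h_1,h_2$ in the hypothesis are tailored to provide. The secondary technical point is ensuring $\beta(r)>0$ so that $\log\beta$ and $\beta^{1/q}$ are smooth; continuity of $\beta$ and excision of the trivial case $f\equiv 0$ resolve this and force the slightly enlarged lower bound $\e'>\e$.
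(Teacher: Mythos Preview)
Your proof is correct and follows essentially the same approach as the paper: differentiate $\beta(r)=\alpha(r)^{q(r)}$ under the integral sign using the dominators $h_1,h_2$ from the hypothesis, then recover $\alpha'(r)$ via logarithmic differentiation and the chain rule. The only cosmetic difference is that the paper selects $\e'$ so that $q(r)<q_0$ (invoking Euclidean regularity to check $f_r^{q(r)}\in L^1(\mu)$ directly), whereas you select $\e'$ to ensure $\beta(r)>0$ and let integrability follow from the DCT bootstrap off $r=1$; both are fine.
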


\begin{remark} Note that $(1/q(r))^{c/2} = r$.   Hence, if $f\in\LSH$ and $\mu$ satisfies the strong hypercontractivity property of (\ref{sHC}) (with $p=1$) we have $\alpha(r) \le \|f\|_1 = \alpha(1)$ for $r\in(0,1]$.  The conditions of Lemma \ref{lemma derivative} guarantee that $\alpha$ is differentiable; hence, we essentially have that $\alpha'(1)\ge 0$.  Equation (\ref{eq derivative 0}) shows that $\alpha'(1)$ is closely related to the expression in (\ref{sLSI}), and indeed this is our method for proving the logarithmic Sobolev inequality in what follows.
\end{remark}

\begin{proof} Set $\beta(r,x) = f(rx)^{q(r)}$, so that $\alpha(r)^{q(r)} = \int \beta(r,x)\,\mu(dx)$.  Note, $\beta(r,x) = f_r(x)^{q(r)}$.  The function $q(r)$ is
continuous and $q(1)=1$, so there is $\e'>0$ so that $q(r)<q_0$ for $r\in(\e',1)$; and hence $f^{q(r)}\in L^1(\mu)$.  (We increase $\e'$ if necessary so $0<\e<\e'$.)  As $\mu$ is Euclidean regular, Lemma \ref{lemma contraction bounded} shows that $f_r^{q(r)}$ is also in $L^1(\mu)$, and so $\beta(r,\cdot)\in L^1(\mu)$ for all $r\in(\e',1)$.  Since $f\in C^\infty$, we can check quickly that $\beta(\cdot,x)$ is as well; using the fact that $q'(r) = -\frac2c r^{-2/c-1} = -\frac{2}{cr}q(r)$, and that $\frac{\del}{\del r} f(rx) = \frac{1}{r}Ef(rx)$, logarithmic differentiation yields
\begin{equation} \label{eq derivative 1}
\frac{\del}{\del r} \beta(r,x) = q(r)\left[-\frac{2}{cr} f(rx)^{q(r)}\log f(rx) + \frac{1}{r}f(rx)^{q(r)-1} Ef(rx)\right]. \end{equation}
From the hypotheses of the Lemma, we therefore have
\[ \left|\frac{\del}{\del r}\beta(r,x)\right| \le \frac{q(r)}{r}\left[\frac{2}{c} h_1(x) + h_2(x)\right] \]
for almost every $x\in\R^n$, for $r\in(\e',1]$.  As $q(r)/r$ is uniformly bounded on $(\e',1]$, we see that $|\frac{\del}{\del r}\beta(r,x)|$ is uniformly bounded above by an $L^1(\mu)$ function.  It now follows from the Lebesgue differentiation theorem that $\alpha(r)^{q(r)} = \int \beta(r,x)\,\mu(dx)$ is differentiable on a neighbourhood of $1$, and
\begin{equation} \label{eq derivative 2} \begin{aligned}
\frac{d}{dr}\left[\alpha(r)^{q(r)}\right] &= \int \frac{\del}{\del r}\beta(r,x)\,\mu(dx) \\
&= -\frac{2}{cr}q(r) \int f(rx)^{q(r)}\log f(rx)\,\mu(dx) + \frac{1}{r}q(r) \int f(rx)^{q(r)-1} Ef(rx)\,\mu(dx). \end{aligned}
\end{equation}
Consequently $\alpha(r)$ is differentiable in a neighbourhood of 1. Again using logarithmic differentiation,
\[ \alpha'(r) = \alpha(r)\frac{d}{dr}\log\alpha(r) = \alpha(r)\frac{d}{dr}\left[ \frac{1}{q(r)} \log \alpha(r)^{q(r)}\right], \]
and again using the fact that $q'(r) = -\frac{2}{cr}q(r)$,
\[ \begin{aligned} \frac{d}{dr}\left[ \frac{1}{q(r)} \log \alpha(r)^{q(r)}\right]
&= \frac{2}{crq(r)} \log \alpha(r)^{q(r)} + \frac{1}{q(r)} \alpha(r)^{-q(r)}\frac{d}{dr}\left[\alpha(r)^{q(r)}\right] \\
&= \frac{\alpha(r)^{-q(r)}}{q(r)}\left(\frac{2}{cr}\alpha(r)^{q(r)} \log\alpha(r)^{q(r)} + \frac{d}{dr}\left[\alpha(r)^{q(r)}\right]\right).
\end{aligned} \]
Combining with (\ref{eq derivative 2}), we therefore have
\begin{equation} \label{eq derivative 3} \begin{aligned}
\alpha'(r) = \frac{\alpha(r)^{1-q(r)}}{q(r)} \Bigg[\frac{2}{cr}\alpha(r)^{q(r)}\log\alpha(r)^{q(r)} &- \frac{2}{cr}q(r) \int f(rx)^{q(r)}\log f(rx)\,\mu(dx) \\
&+ \frac{1}{r}q(r) \int f(rx)^{q(r)-1} Ef(rx)\,\mu(dx) \Bigg]
\end{aligned} \end{equation}
Simplifying (\ref{eq derivative 3}), and using the definition $\alpha(r) = \|f_r\|_{q(r)}$, yields (\ref{eq derivative 0}), proving the lemma.
\end{proof}

We therefore seek conditions on a function $f$ (and on the measure $\mu$) which guarantee the hypotheses of Lemma \ref{lemma derivative} (specifically the existence of the Lebesgue dominating functions $h_1$ and $h_2$).  Naturally, we will work with $\LSH$ functions $f$.  We will also make the fairly strong assumption that $\mu$ is rotationally-invariant.

\begin{notation} \label{def average} Let $f\colon\R^n\to\R$ be locally-bounded.  Denote by $\tilde{f}$ the spherical average of $f$.  That is, with $\vartheta$ denoting Haar measure on the group $O(n)$ of rotations of $\R^n$,
\[ \tilde{f}(x) = \int_{O(n)} f(ux)\,\vartheta(du). \]
\end{notation}

\noindent If $\mu$ is rotationally-invariant, then $\int f\,d\mu = \int \tilde{f}\,d\mu$ for any $f\in L^1(\mu)$.  As such, we can immediately weaken the integrability conditions of Lemma \ref{lemma derivative} as follows.

\begin{lemma} \label{lemma derivative rotation} Suppose $\mu$ is a Euclidean regular probability measure that is invariant under rotations of $\R^n$.  Let $q_0>1$ and let $f\ge 0$ be in $L^{q_0}(\mu)\cap C^\infty(\R^n)$.  Denote by $f_1,f_2\colon(0,1]\times\R^n\to\R$ the functions
\begin{equation} \label{eq f1 f2} f_1(r,x) = f(rx)^{q(r)}\log f(rx), \quad f_2(r,x) = f(rx)^{q(r)-1} Ef(rx). \end{equation}
Fix $\e\in(0,1)$, and suppose that there exist functions $h_1,h_2\in L^1(\mu)$ such that, for $r\in(\e,1]$, $|\tilde{f_j}(r,x)|\le h_j(x)$ for almost every $x$, $j=1,2$.  (Here $\tilde{f}_j(r,\cdot)$ refers to the rotational average of $f_j(r,\cdot)$, as per Notation \ref{def average}.)  Then the conclusion of Lemma \ref{lemma derivative} stands: for some $\e'\in(\e,1)$, the function $\alpha=\alpha_{f,\mu}$ is differentiable on $(\e',1]$, and its derivative is given by (\ref{eq derivative 0}).  \end{lemma}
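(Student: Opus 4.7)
The plan is to reduce to Lemma \ref{lemma derivative} by replacing every integrand by its rotational average. The key observation is that, for any nonnegative or $\mu$-integrable function $\phi$ on $\R^n$, the rotational invariance of $\mu$ together with Fubini yields $\int \phi\,d\mu = \int \tilde\phi\,d\mu$. Applying this to the nonnegative function $\beta(r,x) = f(rx)^{q(r)}$, I rewrite $\alpha(r)^{q(r)} = \int \beta(r,x)\,\mu(dx) = \int \tilde\beta(r,x)\,\mu(dx)$, so that the differentiation target involves only averaged quantities. This is the crucial move: differentiation under the integral will require an $L^1$ dominator for the $r$-derivative of the integrand, and after averaging that derivative naturally depends on $\tilde{f}_1, \tilde{f}_2$ rather than $f_1, f_2$.

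Next I would verify that $\partial_r$ commutes with spherical averaging. Since $f\in C^\infty(\R^n)$, the pointwise identity $\partial_r\beta(r,ux) = q(r)\bigl[-\tfrac{2}{cr} f(rux)^{q(r)}\log f(rux) + \tfrac{1}{r} f(rux)^{q(r)-1} Ef(rux)\bigr]$ holds exactly as in (\ref{eq derivative 1}), and the right-hand side is jointly continuous in $(u,r) \in O(n)\times[\e',1]$ for each fixed $x$ and some $\e'\in(\e,1)$. Compactness of $O(n)$ together with a routine Leibniz-rule argument then gives $\partial_r \tilde\beta(r,x) = q(r)\bigl[-\tfrac{2}{cr}\tilde{f}_1(r,x) + \tfrac{1}{r}\tilde{f}_2(r,x)\bigr]$. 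The assumed pointwise bounds on $\tilde f_1$ and $\tilde f_2$ immediately furnish the $\mu$-integrable majorant $|\partial_r \tilde\beta(r,x)| \le \tfrac{q(r)}{r}\bigl[\tfrac{2}{c}h_1(x) + h_2(x)\bigr]$, uniformly for $r \in (\e',1]$ since $q(r)/r$ is bounded there. The Lebesgue differentiation theorem would then justify differentiating under the integral, yielding $\tfrac{d}{dr}\alpha(r)^{q(r)} = \int \partial_r \tilde\beta(r,x)\,\mu(dx) = q(r)\int\bigl[-\tfrac{2}{cr}\tilde f_1(r,x) + \tfrac{1}{r}\tilde f_2(r,x)\bigr]\,\mu(dx)$. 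Invoking rotational invariance once more to drop the tildes inside the integrals, I would recover exactly equation (\ref{eq derivative 2}) from the proof of Lemma \ref{lemma derivative}.

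From that point the derivation proceeds identically to the earlier lemma: logarithmic differentiation of $\alpha(r) = \bigl(\alpha(r)^{q(r)}\bigr)^{1/q(r)}$ combined with the chain rule produces formula (\ref{eq derivative 0}). The only step that requires genuine care is the commutation of $\partial_r$ with $\int_{O(n)}$; this is routine thanks to compactness of $O(n)$ and smoothness of $f$, but it is precisely what makes the weakening from $|f_j|\le h_j$ to $|\tilde f_j|\le h_j$ work, because the $r$-derivative of $\tilde\beta$ involves no quantities beyond the rotational averages themselves.
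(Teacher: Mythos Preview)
Your proposal is correct and follows essentially the same route as the paper: rewrite $\alpha(r)^{q(r)}$ as $\int\tilde\beta(r,x)\,\mu(dx)$ via rotational invariance, commute $\partial_r$ with the $O(n)$-average to identify $\partial_r\tilde\beta$ with $q(r)\bigl[-\tfrac{2}{cr}\tilde f_1+\tfrac1r\tilde f_2\bigr]$, use the assumed bounds as the Lebesgue dominator, and then undo the tildes by rotational invariance to recover (\ref{eq derivative 2}). The only cosmetic difference is that you justify the commutation via joint continuity and compactness of $O(n)$, whereas the paper invokes local boundedness of $\beta$; both are adequate.
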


\begin{proof} Following the proof of Lemma \ref{lemma derivative}, only a few modifications are required.  Defining $\beta(r,x)$ as above, $\alpha(r)^{q(r)} = \int \beta(r,x)\,\mu(dx)$; since $\mu$ is rotationally-invariant, this is equal to $\int \tilde{\beta}(r,x)\,\mu(dx)$ where $\tilde{\beta}$ refers to the rotational average of $\beta$ in the variable $x$.  Evidently $\tilde{\beta}(r,\cdot)$ is $\mu$-integrable for sufficiently large $r<1$ (since $\beta$ is).  To use the Lebesgue differentiation technique, we must verify that $\frac{\del}{\del r}\tilde{\beta}(r,x)$ exists for almost every $x$ and is uniformly bounded by an $L^1(\mu)$ dominator.  Note that $\beta(r,x)$ is locally-bounded in $x$ for each $r$, and so for fixed $x$ it is easy to verify that indeed
\[ \frac{\del}{\del r} \tilde{\beta}(r,x) = \int_{O(n)} \frac{\del}{\del r} \beta(r,ux)\,\vartheta(du). \]
Using (\ref{eq derivative 1}), we then have
\[ \frac{\del}{\del r}\tilde{\beta}(r,x) = q(r)\int_{O(n)}\left(-\frac{2}{cr} f(rux)^{q(r)}\log f(rux) + \frac{1}{r}f(rux)^{q(r)-1}Ef(rux)\right)\,\vartheta(du). \]
That is, using (\ref{eq f1 f2}), $\frac{\del}{\del r}\tilde{\beta}(r,x) = q(r)\left[-\frac{2}{cr} \tilde{f_1}(r,x) +\frac{1}{r}\tilde{f_2}(r,x)\right]$.   Hence, from the assumptions of this lemma,
\[ \left|\frac{\del}{\del r}\tilde{\beta}(r,x)\right| \le \frac{q(r)}{r}\left[\frac{2}{c} h_1(x) + h_2(x)\right] \]
and so, since $q(r)/r$ is uniformly bounded for $r\in(\frac12,1]$, it follows that $\alpha(r)^{q(r)} = \int \tilde{\beta}(r,x)\,\mu(dx)$ is differentiable near $1$, with derivative given by
\[ \int \frac{\del}{\del r}\tilde{\beta}(r,x)\,\mu(dx) = q(r)\left[ -\frac{2}{rc}\int \tilde{f_1}(r,x)\,\mu(dx) + \frac{1}{r}\int \tilde{f_2}(r,x)\,\mu(dx).\right] \]
Now using the rotational-invariance of $\mu$ again, these integrals are the same as the corresponding non-rotated integrands $\int f_j(r,x)\,\mu(dx)$, yielding the same result as (\ref{eq derivative 2}).  The remainder of the proof follows the proof of Lemma \ref{lemma derivative} identically.
\end{proof}

\begin{remark} The point of Lemma \ref{lemma derivative rotation} -- that it is sufficient to find uniform Lebesgue dominators for the {\em rotational averages} of the terms in (\ref{eq terms to bound}) -- is actually quite powerful for us.  While a generic subharmonic function in dimension $\ge 2$ may not have good global properties, a {\em rotationally-invariant} subharmonic function does, as the next proposition demonstrates.  We will exploit this kind of behaviour to produce the necessary bounds to verify the conditions of Lemma \ref{lemma derivative rotation} and prove the differentiability of the norm. \end{remark}

\begin{proposition} \label{prop sh rotation} Let $f\colon\R^n\to\R$ be subharmonic and locally-bounded.  Then $\tilde{f}$ is also subharmonic; moreover, for fixed $x\in\R^n$, $r\mapsto \tilde{f}(rx)$ is an increasing function of $r\in[0,1]$. \end{proposition}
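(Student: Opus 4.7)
The plan is to identify $\tilde{f}$ with the spherical mean of $f$ about the origin, and then invoke the classical monotonicity of spherical means of subharmonic functions.

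First I would check that $\tilde f$ inherits the regularity of $f$. Local boundedness is immediate since $|ux|=|x|$ for $u\in O(n)$. Upper semi-continuity follows from reverse Fatou applied to $u\mapsto f(ux_n)$ as $x_n\to x$, using the uniform upper bound of $f$ on a compact neighborhood. To verify the sub-mean value property on a ball $B(x,r)$, I would apply subharmonicity of $f$ at each point $ux$, change variables $y=uv$ inside the inner ball integral (using $O(n)$-invariance of Lebesgue measure, which sends $B(ux,r)$ to $B(x,r)$), and then apply Fubini (legitimate because $f\in L^1_{\mathrm{loc}}$):
\begin{align*}
\tilde f(x)=\int_{O(n)} f(ux)\,d\vartheta(u) &\le \int_{O(n)} \frac{1}{|B(0,r)|}\int_{B(ux,r)} f(y)\,dy\,d\vartheta(u) \\
&= \frac{1}{|B(0,r)|}\int_{B(x,r)} \tilde f(v)\,dv.
\end{align*}
This shows that $\tilde f$ is subharmonic.

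For the second assertion, left-invariance of $\vartheta$ forces $\tilde f$ to be $O(n)$-invariant, hence radial: write $\tilde f(x)=G(|x|)$. For $\rho=|x|>0$, the pushforward of $\vartheta$ under $u\mapsto ux$ is the normalized surface measure on $\partial B(0,\rho)$, so $G(\rho)$ is precisely the spherical mean $M(f,0,\rho)$ of $f$ over that sphere. A classical result in potential theory then asserts that for any subharmonic $f$, the function $\rho\mapsto M(f,0,\rho)$ is non-decreasing on $[0,\infty)$; distributionally this amounts to $(\rho^{n-1}G'(\rho))'\ge 0$ together with $G'(0^+)\ge 0$, obtained by integrating the radial form of $\Delta \tilde f\ge 0$ and noting that $\rho^{n-1}G'(\rho)\to 0$ as $\rho\downarrow 0$. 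Applying this with $\rho=r|x|$ gives the claimed monotonicity of $r\mapsto \tilde f(rx)$ on $[0,1]$.

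The only mildly subtle point is carrying upper semi-continuity and local integrability of $f$ through the Fubini exchange in the first step, but both are automatic from the standing hypotheses. Once $\tilde f$ is identified as a radial spherical mean, the monotonicity is a classical statement, so I do not anticipate any serious obstacle.
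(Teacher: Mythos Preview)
Your argument is correct. The first part (subharmonicity of $\tilde f$) is essentially the same as the paper's: you verify the sub-mean property directly by averaging the sub-mean property of $f$ and changing variables, while the paper first observes that each $f\circ u$ is subharmonic and then averages; these are two phrasings of the same computation.

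For the monotonicity step you take a different route. You identify $\tilde f(x)$ with the spherical mean $M(f,0,|x|)$ and invoke the classical fact that spherical means of a subharmonic function are non-decreasing in the radius (sketching the distributional radial-Laplacian argument). The paper instead argues directly from the maximum principle: since $\tilde f$ is subharmonic and constant on $\partial B(0,|x|)$, the value at the interior point $rx$ cannot exceed that boundary value, giving $\tilde f(rx)\le \tilde f(x)$ in one line. Your approach has the virtue of making the link to classical potential theory explicit and gives a bit more (monotonicity on all of $[0,\infty)$, not just $[0,1]$), but your distributional sketch tacitly assumes enough regularity of $G$ to speak of $G'(0^+)$; the paper's maximum-principle argument avoids this entirely and is fully self-contained under the stated hypotheses.
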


\begin{proof} 
Fix $u\in O(n)$.  Since $f$ is locally-bounded, subharmonicity means that $\fint_{B(x,r)} f(t)\,dt \ge f(x)$ for every $x\in\R^n$, $r\in(0,\infty)$.  Changing variables, we have
\[ \fint_{B(x,r)} f(\alpha t)\,dt = \fint_{u\cdot B(x,r)} f(t)\,dt = \fint_{B(u x,r)} f(t)\,dt \ge f(ux). \]
Hence, $f\circ u$ is subharmonic for each $u\in O(n)$.  The local-boundedness of $f$ means that the function $u\mapsto f(ux)$ is uniformly bounded in $L^1(O(n),\vartheta)$ for $x$ in a compact set, and hence it follows that $\tilde{f}$ is subharmonic.

\medskip

\noindent  Hence $\tilde{f}$ is a rotationally-invariant subharmonic function.  Fix $x\in\R^n$ and $r\in[0,1]$.  Then $rx$ is in the ball $B(0,|x|)$, and since $\tilde{f}$ is subharmonic, the maximum principle asserts that $\tilde{f}(rx)$ is no larger than the maximum of $f$ on $\del B(0,|x|)$.  But $\tilde{f}$ is constantly equal to $\tilde{f}(x)$ on $\del B(0,|x|)$ by rotational-invariance, and so $\tilde{f}(rx)\le \tilde{f}(x)$, proving the proposition.
\end{proof}

Proposition \ref{prop sh rotation} makes it quite easy to provide a uniform Lebesgue dominating function for the function $f_1$ in Lemma \ref{lemma derivative rotation}.

\begin{proposition} \label{prop bound f1} Suppose $\mu$ is a rotationally-invariant probability measure on $\R^n$.  Let $q_0>1$, and let $f\ge 0$ be {\em subharmonic} and in $L^{q_0}(\mu)$.  Define $f_1$ as in (\ref{eq f1 f2}): $f_1(r,x) = f(rx)^{q(r)} \log f(rx)$.  Set $g_1(x) = f(x)^{q_0}$, and set $h_1 = \tilde{g_1}+1$; i.e.\ $h_1(x)=1+ \int_{O(n)} f(ux)^{q_0}\,\vartheta(du)$. Then $h_1\in L^1(\mu)$ and  there is an $\e\in(0,1)$ and a constant $C>0$ so that for all $r\in(\e,1]$, $|\tilde{f_1}(r,x)| \le C h_1(x)$ for almost every $x$. \end{proposition}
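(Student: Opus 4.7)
The plan is to decouple the bound into a purely analytic pointwise estimate on the function $t \mapsto t^{q(r)}\log t$ and a subharmonic-monotonicity argument based on Proposition \ref{prop sh rotation}. First, since $q(r) = r^{-2/c}$ is continuous with $q(1)=1$, choose $\e \in (0,1)$ so that $q(r) \in [1, q_0 - \delta]$ for some fixed $\delta > 0$ and all $r \in (\e,1]$. For such $r$, I claim there are constants $C_1, C_2 > 0$ (independent of $r$ and of $t \ge 0$) with
\[ t^{q(r)}|\log t| \;\le\; C_1 + C_2\,t^{q_0}, \qquad t \ge 0. \]
This is routine: for $t \in (0,1]$ the function $-t^{q(r)}\log t$ is maximized at $t = e^{-1/q(r)}$ with value $1/(e\,q(r)) \le 1/e$, handled by $C_1$; for $t \ge 1$ one uses $\log t \le \frac{1}{q_0 - q(r)}\, t^{q_0 - q(r)} \le \frac{1}{\delta} t^{q_0 - q(r)}$, so that $t^{q(r)}\log t \le \delta^{-1} t^{q_0}$. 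Uniformity of $C_1,C_2$ on $(\e,1]$ comes from the uniform upper bound on $q(r)$ and the uniform lower bound $q_0 - q(r) \ge \delta$.

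Applying this with $t = f(rux)$ and integrating over $O(n)$,
\[ |\widetilde{f_1}(r,x)| \;\le\; \int_{O(n)} |f(rux)^{q(r)} \log f(rux)|\,\vartheta(du) \;\le\; C_1 + C_2 \int_{O(n)} f(rux)^{q_0}\,\vartheta(du). \]
The remaining integral is exactly $\widetilde{g_1}(rx)$, since $u$ ranges over $O(n)$ and $r$ is a scalar. Now I invoke the subharmonic structure: because $f \ge 0$ is subharmonic and $\phi(t) = t^{q_0}$ is convex and nondecreasing on $[0,\infty)$ (as $q_0 > 1$), the composition $g_1 = f^{q_0} = \phi \circ f$ is subharmonic. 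Moreover $g_1$ is locally bounded (it inherits upper semi-continuity and local boundedness from $f$), so Proposition \ref{prop sh rotation} applies and yields that $r \mapsto \widetilde{g_1}(rx)$ is nondecreasing on $[0,1]$. Hence $\widetilde{g_1}(rx) \le \widetilde{g_1}(x)$ for every $r \in [0,1]$, giving
\[ |\widetilde{f_1}(r,x)| \;\le\; C_1 + C_2\, \widetilde{g_1}(x) \;\le\; \max(C_1,C_2)\,h_1(x), \]
which is the desired inequality with $C = \max(C_1,C_2)$.

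It remains to check $h_1 \in L^1(\mu)$. Since $\mu$ is rotationally invariant and $g_1 = f^{q_0} \in L^1(\mu)$ (because $f \in L^{q_0}(\mu)$), Fubini combined with the invariance gives $\int \widetilde{g_1}\,d\mu = \int g_1\,d\mu = \|f\|_{q_0}^{q_0} < \infty$; together with $\mu(\R^n) = 1$ this gives $h_1 = 1 + \widetilde{g_1} \in L^1(\mu)$.

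I do not anticipate any serious obstacle; the only mild subtlety is ensuring that $C_1$ and $C_2$ can be chosen uniformly in $r \in (\e,1]$, which is handled by the uniform gap $q_0 - q(r) \ge \delta > 0$ coming from the continuity of $q$ at $r=1$. The conceptual content is really the interplay between the elementary convex bound on $t^a|\log t|$ and the monotonicity of rotationally averaged subharmonic functions under radial dilation, the latter being precisely the reason the rotation-invariance hypothesis on $\mu$ was introduced.
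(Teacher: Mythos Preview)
Your proposal is correct and follows essentially the same approach as the paper: a pointwise calculus estimate $t^{q(r)}|\log t|\le C(1+t^{q_0})$ uniform for $r\in(\e,1]$, followed by the rotational average and the monotonicity $\widetilde{g_1}(rx)\le\widetilde{g_1}(x)$ from Proposition \ref{prop sh rotation} applied to the subharmonic function $g_1=f^{q_0}$. The only cosmetic difference is that the paper uses the sharper bound $u^{-\delta}\log u\le\frac{1}{e\delta}$ for $u\ge1$, while you use $\log t\le\frac{1}{\delta}t^{q_0-q(r)}$; both yield the same conclusion.
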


\begin{remark} By the rotational-invariance of $\mu$, $\int h_1\,d\mu = \int \tilde{g_1}\,d\mu+1 = \int g_1\,d\mu+1 = \int f^{q_0}\,d\mu+1 <\infty$, and so $h_1$ is a uniform $L^1(\mu)$ dominator verifying the first condition of Lemma \ref{lemma derivative rotation}. \end{remark}

\begin{proof} Choose some small $\delta\in(0,1)$.  First note from simple calculus that, for $u\ge 1$, $u^{-\delta}\log u \le \frac{1}{e\delta}$. Now, choose $\e\in(0,1)$ so that $q\e < q_0-\delta$; then $q(r) < q_0-\delta$ for $r\in(\e,1]$.  Consequently, if $f(y)\ge 1$, we have
\[ 0\le f(y)^{q(r)}\log f(y) \le  f(y)^{q_0-\delta}\log f(y) \le \frac{1}{e\delta} f(y)^{q_0}. \]
On the other hand, for $0\le u\le 1$, $|u^{q(r)}\log u| \le \frac{1}{eq(r)} \le \frac1e$ (again by simple calculus).  Thus, since $f\ge 0$, in total we have
\begin{equation} \label{eq log estimate}
|f(y)^{q(r)}\log f(y)| \le \frac{1}{e}\max\left\{\frac{1}{\delta}f(y)^{q_0},1\right\} \le \frac{1}{e\delta}[f(y)^{q_0}+1].
\end{equation}
Set $C = \frac{1}{e\delta}$.  With $y=rx$, the left-hand-side of (\ref{eq log estimate}) is precisely $f_1(r,x)$.  Averaging (\ref{eq log estimate}) over $O(n)$ and recalling that $g_1(y)=f(y)^{q_0}$, we have
\[ |\tilde{f_1}(r,x) \le C [\tilde{g_1}(rx)+1]. \]
Recall that if $\varphi$ is convex and $f$ is subharmonic then $\varphi\circ f$ is also subharmonic.  Thus, since $q_0>1$ and $f$ is subharmonic, $g_1$ is also subharmonic, and hence from Proposition \ref{prop sh rotation}, $\tilde{g_1}(rx)\le \tilde{g_1}(x)$.  This proves the proposition.
\end{proof}

\noindent We must now bound the second term $\tilde{f_2}(r,\cdot)$ uniformly for $r$ in a neighbourhood of $1$.  The following Lemma is useful in this regard.

\begin{lemma} \label{lemma bound Ek} Let $\tilde{k}$ be a $C^\infty$ non-negative subharmonic rotationally-invariant function.  Then for $x\in\R^n$ and $r\in(0,1]$,
\begin{equation} \label{eq bound Ek} E\tilde{k}(rx) \le r^{2-n} E\tilde{k}(x). \end{equation}
\end{lemma}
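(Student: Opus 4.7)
The plan is to reduce the inequality to a one-dimensional monotonicity statement via rotational invariance, and then read off the desired bound from the subharmonicity condition expressed in radial coordinates.

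Since $\tilde{k}$ is rotationally-invariant and $C^\infty$, there exists a smooth function $K\colon[0,\infty)\to\R$ with $\tilde{k}(x)=K(|x|)$. A direct computation gives $\nabla\tilde{k}(x)=K'(|x|)\,x/|x|$ for $x\neq 0$, so that
\[ E\tilde{k}(x) = x\cdot\nabla\tilde{k}(x) = |x|\,K'(|x|). \]
At $x=0$ both sides of (\ref{eq bound Ek}) vanish and the inequality is trivial, so assume $x\neq 0$ and write $\rho=|x|>0$.

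Next I would translate the hypotheses on $\tilde{k}$ into properties of $K$. The Laplacian of a radial $C^\infty$ function is $\Delta\tilde{k}(x)=K''(\rho)+\frac{n-1}{\rho}K'(\rho)=\rho^{1-n}\bigl(\rho^{n-1}K'(\rho)\bigr)'$, so the subharmonicity $\Delta\tilde{k}\ge 0$ is equivalent to the statement that $\rho\mapsto\rho^{n-1}K'(\rho)$ is non-decreasing on $(0,\infty)$. Moreover, by Proposition \ref{prop sh rotation} applied to the rotationally-invariant subharmonic function $\tilde{k}$, the map $r\mapsto K(r\rho)=\tilde{k}(rx)$ is non-decreasing on $[0,1]$, so $K'\ge 0$ everywhere.

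Now I would simply combine these. For $r\in(0,1]$, monotonicity of $\rho\mapsto\rho^{n-1}K'(\rho)$ yields
\[ (r\rho)^{n-1}K'(r\rho) \le \rho^{n-1}K'(\rho), \]
and both sides are non-negative since $K'\ge 0$. Dividing by $\rho^{n-1}$ and multiplying by $r\rho\ge 0$ gives
\[ r\rho\,K'(r\rho) \le r^{2-n}\,\rho\,K'(\rho), \]
which is exactly $E\tilde{k}(rx)\le r^{2-n}E\tilde{k}(x)$.

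There is no real obstacle here; the only mild subtlety is ensuring the radial representation and the formula for $\Delta\tilde{k}$ are valid at the origin, which is handled by the smoothness of $\tilde{k}$ (forcing $K'(0)=0$) and the trivial vanishing of both sides at $x=0$.
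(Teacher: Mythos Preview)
Your argument is correct and essentially identical to the paper's: both pass to the radial profile, use subharmonicity to see that $\rho\mapsto\rho^{n-1}K'(\rho)$ is non-decreasing (the paper phrases this as $F(r)=r^{n-2}E\tilde{k}(rx)$ having $F'\ge 0$, which is the same computation), and read off the bound. One small slip: the operations ``divide by $\rho^{n-1}$ and multiply by $r\rho$'' actually produce $r^{n}\rho K'(r\rho)\le r\rho K'(\rho)$; the clean step is to multiply the monotonicity inequality by the positive quantity $(r\rho)^{2-n}$, after which your detour through $K'\ge0$ (and Proposition~\ref{prop sh rotation}) becomes unnecessary.
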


\begin{proof} Since $\tilde{k}$ is rotationally-invariant, there is a function $h\colon[0,\infty)\to\R$ so that $\tilde{k}(x) = h(|x|)$.  The Laplacian of $\tilde{k}$ can then be expressed in terms of derivatives of $h$; the result is
\begin{equation} \label{eq Laplacian} \Delta\tilde{k}(x) = h''(|x|) + (n-1)\frac{1}{|x|}h'(|x|). \end{equation}
Hence, since $\tilde{k}$ is subharmonic and smooth, it follows that for $t>0$,
\begin{equation} \label{eq Laplacian 2} t\,h''(t) + (n-1)h'(t) \ge 0. \end{equation}
One can also check that, in this case, $E\tilde{k}(x) = |x|h'(|x|)$.  Now, define $F(r) = r^{n-2} E\tilde{k}(rx) = r^{n-2} r|x| h'(r|x|)$.  Then $F$ is smooth on $(0,\infty)$ and $F(1) = |x|h'(|x|) = E\tilde{k}(x)$.  We differentiate, yielding
\[ \begin{aligned} F'(r) &= |x|\frac{d}{dr} r^{n-1}h'(r|x|) = |x|(n-1)r^{n-2} h'(r|x|) + |x|r^{n-1} h''(r|x|)|x| \\
&= |x|r^{n-2}\left[r|x|h''(r|x|) + (n-1)h'(r|x|)\right].
\end{aligned} \]
Equation (\ref{eq Laplacian 2}) with $t=r|x|$ now yields that $F'(r)\ge 0$ for $r>0$.  Hence, $F(r)\le F(1)$ for $r\le 1$.  This is precisely the statement of the lemma.
\end{proof}

\begin{proposition} \label{prop bound f2} Let $q_0>1$ and let $\mu$ be a rotationally-invariant probability measure on $\R^n$.  Let $f\ge 0$ be subharmonic, $C^\infty$, and {\em in $L^{q_0}_E(\mu)$}.  Define $f_2$ as in (\ref{eq f1 f2}): $f_2(r,x) = f(rx)^{q(r)-1} Ef(rx)$.  Set $g_3(x) = (f(x)^{q_0-1}+1)|Ef(x)|$, and set $h_2 = \tilde{g_3}$.  Then there is an $\e\in(0,1)$ and a constant $C>0$ so that for all $r\in(\e,1]$, $|\tilde{f_2}(r,x)|\le Ch_2(x)$ for almost every $x$; moreover, $h_2\in L^1(\mu)$.
\end{proposition}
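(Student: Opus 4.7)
The plan is to reduce $f_2(r,\cdot)$ to an Euler derivative of a subharmonic function and then invoke Lemma \ref{lemma bound Ek}. Since the chain rule gives $E(f^{q(r)}) = q(r)\,f^{q(r)-1}Ef$, I would set $F=f^{q(r)}$ and observe that $f_2(r,x) = q(r)^{-1}EF(rx)$. The Euler operator commutes with rotational averaging (because for $u\in O(n)$, $u^{-1}=u^T$ gives $E(G\circ u) = (EG)\circ u$), so
\[
\tilde{f_2}(r,x) = \frac{1}{q(r)}\widetilde{EF}(rx) = \frac{1}{q(r)}E\tilde{F}(rx).
\]
For $r\in(0,1]$ one has $q(r)\ge 1$, and so $t\mapsto t^{q(r)}$ is convex and increasing on $[0,\infty)$; hence $F$ is non-negative subharmonic, and thus so is $\tilde F$, which is moreover rotationally invariant. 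Proposition \ref{prop sh rotation} makes $\tilde F$ radially non-decreasing, so $E\tilde F\ge 0$, which in particular forces $\tilde{f_2}(r,x)\ge 0$. Lemma \ref{lemma bound Ek} applied to $\tilde F$ then yields $E\tilde F(rx)\le r^{2-n}E\tilde F(x)$.

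Next I would convert this estimate into a bound by $h_2$. Unwinding the identity $E\tilde F(x) = \widetilde{EF}(x) = q(r)\widetilde{f^{q(r)-1}Ef}(x)$ gives
\[
0\le \tilde{f_2}(r,x)\le r^{2-n}\,\widetilde{f^{q(r)-1}Ef}(x).
\]
Pick $\e\in(q_0^{-c/2},1)$, so that $q(r)-1\in[0,q_0-1]$ for every $r\in(\e,1]$. Then the elementary pointwise inequality $t^{q(r)-1}\le 1+t^{q_0-1}$ (valid for $t\ge 0$ and exponent in $[0,q_0-1]$) implies $f(y)^{q(r)-1}|Ef(y)|\le g_3(y)$; averaging over $O(n)$ and passing the absolute value inside yields $\widetilde{f^{q(r)-1}Ef}(x)\le \tilde{g_3}(x)=h_2(x)$. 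With $C=\max\{1,\e^{2-n}\}$ this gives $|\tilde{f_2}(r,x)|\le Ch_2(x)$ for all $r\in(\e,1]$. The $L^1(\mu)$-integrability of $h_2$ follows from the $O(n)$-invariance of $\mu$, which reduces $\int h_2\,d\mu$ to $\int g_3\,d\mu = \int f^{q_0-1}|Ef|\,d\mu + \int |Ef|\,d\mu$; H\"older's inequality with conjugate exponents $q_0/(q_0-1)$ and $q_0$ bounds the first integral by $\|f\|_{q_0}^{q_0-1}\|Ef\|_{q_0}$, while the second is bounded by $\|Ef\|_{q_0}$ since $\mu$ is a probability measure. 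Both are finite by the hypothesis $f\in L^{q_0}_E(\mu)$.

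The one delicate point I anticipate is the regularity requirement in Lemma \ref{lemma bound Ek}: for non-integer $q(r)$, $F=f^{q(r)}$ is only $C^1$ near the zero set of $f$, and the rotational average $\tilde F$ may inherit this limited regularity, preventing direct application of the Lemma as stated. I would circumvent this by introducing the smooth approximant $F_\delta=(f^2+\delta)^{q(r)/2}$, which is $C^\infty$, non-negative, and still subharmonic because the generator $\phi_\delta(t)=(t^2+\delta)^{q(r)/2}$ is easily checked to be convex and increasing on $[0,\infty)$ for $q(r)\ge 1$. Applying Lemma \ref{lemma bound Ek} to $\tilde{F_\delta}$ and then passing to the limit $\delta\downarrow 0$ via dominated convergence recovers the desired inequality for $\tilde F$ and completes the argument.
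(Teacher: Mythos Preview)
Your argument is correct and follows essentially the same route as the paper: rewrite $f_2$ as $q(r)^{-1}E(f^{q(r)})$ evaluated at $rx$, commute $E$ with the spherical average, apply Lemma \ref{lemma bound Ek} to the rotationally-invariant subharmonic function $\widetilde{f^{q(r)}}$, and then bound pointwise by $g_3$ using $q(r)<q_0$, finishing with H\"older for the $L^1$ claim. Your explicit treatment of the smoothness hypothesis in Lemma \ref{lemma bound Ek} via the approximant $F_\delta=(f^2+\delta)^{q(r)/2}$ is in fact more careful than the paper, which applies the lemma directly to $\widetilde{f^{q(r)}}$ without commenting on the possible failure of $C^\infty$ at zeros of $f$.
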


\begin{proof} Fix $\e\in(0,1)$ small enough that $q(r)<q_0$ for all $r\in(\e,1]$.  Define $g_2(r,y) = f(y)^{q(r)-1}Ef(y)$. and note that $f_2(r,x)$ is given by the dilation $f_2(r,x) = g_2(r,rx)$.  Since $E$ is a first-order differential operator, we can quickly check that
\[ g_2(r,y) =\frac{1}{q(r)}E(f^{q(r)})(y). \]
We now average both sides over $O(n)$.  Set $k=f^{q(r)}$, which is $C^\infty$, and let $u\in O(n)$.  Then we have the following calculus identity:
\[ E(k\circ u)(y) = y\cdot \nabla(k\circ u)(y) = y\cdot u^\top\nabla k(u y) = (u y)\cdot \nabla k (u y) = (Ek)(u y). \]
For fixed $y$ the function $u\mapsto (Ek)(uy)$ is uniformly bounded and so we integrate both sides to yield
\[ \widetilde{Ek}(y) = \int_{O(n)} (Ek)(uy)\,\vartheta(du) = \int E(k\circ u)(y)\,\vartheta(du) = E\int k\circ u (y)\,\vartheta(du) = E(\tilde{k})(y). \]
In other words, $\tilde{g_2}(r,y) =\frac{1}{q(r)} E(\widetilde{f^{q(r)}})(y)$.  As in the proof of Proposition \ref{prop bound f1}, the function $\tilde{k}=\widetilde{f^{q(r)}}$ is subharmonic, and rotationally invariant.  Hence, we employ Lemma \ref{lemma bound Ek} and have
\[ \tilde{g_2}(r,rx) =\frac{1}{q(r)} E\tilde{k}(rx) \le\frac{1}{q(r)} r^{2-n}E\tilde{k}(x) =r^{2-n} \tilde{g_2}(r,x). \]
Since $r^{2-n}$ is uniformly bounded for $r\in(\e,1]$, it now suffices to find a uniform dominator for $\tilde{g_2}(r,x)$.

\medskip

\noindent We therefore make the estimates: since $q(r)<q_0$ we have
\[\begin{aligned} |g_2(r,x)| =  f(x)^{q(r)-1}|Ef(x)| \le \max\{f(x)^{q(r)-1},1\}|Ef(x)| &\le \max\{f(x)^{q_0-1},1\}|Ef(x)| \\
&\le \left(f(x)^{q_0-1}+1\right)|Ef(x)|. \end{aligned}\]
That is to say, $|g_2(r,x)| \le g_3(x)$ for $r\in(\e,1]$.  Hence,
\[ |\tilde{g_2}(r,x)| = \left|\int_{O(n)} g_2(r,ux)\,\vartheta(du)\right| \le \int_{O(n)} |g_2(r,ux)|\,\vartheta(du) \le \int_{O(n)} g_3(ux)\,\vartheta(du) = \tilde{g_3}(x) = h_2(x),  \]
thus proving the estimate.

\medskip

\noindent As usual, by rotational invariance of $\mu$, $\int \tilde{g_3}\,d\mu = \int g_3\,d\mu$, and so to show $h_2\in L^1(\mu)$ we need only verify that $g_3\in L^1(\mu)$.  To that end, we break up $g_3(x) = f(x)^{q_0-1}|Ef(x)| + |Ef(x)|$.  By assumption, $f\in L^{q_0}_E(\mu)$ and so $|Ef|\in L^{q_0}(\mu)$; as $\mu$ is a finite measure, this means that $|Ef|\in L^1(\mu)$ and hence the second term is integrable.  For the first term, we use H\"older's inequality:
\[ \int f^{q_0-1} |Ef|\,d\mu \le \| f^{q_0-1} \|_{q_0'} \|Ef\|_{q_0} = \|f\|_{q_0}^{q_0-1} \|Ef\|_{q_0}. \]
Both terms are finite since $f\in L^{q_0}_E(\mu)$, and hence $g_3\in L^1(\mu)$, proving the proposition.
\end{proof}

\noindent Combining Lemma \ref{lemma derivative rotation} and Propositions \ref{prop bound f1} and \ref{prop bound f2}, we therefore have the following.

\begin{theorem} \label{theorem ==> LSI}
Let $q_0>1$ and let $\mu$ be a probability measure of Euclidean type $q_0$, that is invariant under rotations of $\R^n$.  Suppose that $\mu$ satisfies strong hypercontractivity of (\ref{sHC}) with constant $c>0$.  Let $f\in L^{q_0}_E(\mu)\cap\LSH\cap C^\infty$.  Then the strong log-Sobolev inequality, (\ref{sLSI}), holds for $f$:
\[ \int f\log f\,d\mu - \int f\,d\mu \,\log\int f\,d\mu \le \frac{c}{2}\int Ef\,d\mu. \]
\end{theorem}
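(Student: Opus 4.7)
The plan is to apply Lemma~\ref{lemma derivative rotation} to the function $\alpha(r) := \|f_r\|_{L^{q(r)}(\mu)}$, extract monotonicity at $r=1$ from strong hypercontractivity, and then read (\ref{sLSI}) directly off the explicit derivative formula (\ref{eq derivative 0}) evaluated at $r=1$. The key arithmetic facts I will use are $q(1)=1$ and $q'(r) = -\frac{2}{cr}q(r)$.

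First I would verify the hypotheses of Lemma~\ref{lemma derivative rotation}. Because $f \in \LSH$ means $\log f$ is subharmonic and $u \mapsto e^u$ is convex and non-decreasing, $f = e^{\log f}$ is itself a non-negative subharmonic function. Since $\mu$ is a probability measure and $q_0 > 1$, the inclusion $f \in L^{q_0}_E(\mu) \subset L^{q_0}(\mu)$ combined with H\"older places $f \in L^1(\mu)$ as well. The smoothness and subharmonicity of $f$, together with the rotational invariance and Euclidean-exponential type $q_0$ of $\mu$, now make Propositions~\ref{prop bound f1} and~\ref{prop bound f2} applicable: they furnish $L^1(\mu)$ dominators for the rotational averages $\widetilde{f}_1(r,\cdot)$ and $\widetilde{f}_2(r,\cdot)$, uniformly for $r$ in some left neighbourhood $(\e,1]$ of $1$. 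Hence Lemma~\ref{lemma derivative rotation} applies, and $\alpha$ is differentiable on some $(\e',1]$ with $\alpha'(r)$ given by (\ref{eq derivative 0}).

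Next I would invoke Proposition~\ref{prop SHC increasing} with $p=1$: from (\ref{sHC}) we obtain $\|f_r\|_{q(r)} \le \|f\|_1$ for all $r \in (0,1]$, that is, $\alpha(r) \le \alpha(1)$. The one-sided difference quotients $(\alpha(1)-\alpha(r))/(1-r)$ are therefore $\ge 0$ for $r<1$, so the left derivative at $1$, which coincides with $\alpha'(1)$ from the previous step, is non-negative. Evaluating (\ref{eq derivative 0}) at $r=1$ the prefactor $\frac{2}{crq(r)}\|f_r\|_{q(r)}^{1-q(r)}$ collapses to $\frac{2}{c}$ and the bracket becomes
\[
\|f\|_1 \log \|f\|_1 - \int f \log f \, d\mu + \frac{c}{2} \int Ef \, d\mu \;=\; -\mathrm{Ent}_\mu(f) + \frac{c}{2} \int Ef \, d\mu.
\]
The inequality $\alpha'(1) \ge 0$ then yields $\mathrm{Ent}_\mu(f) \le \frac{c}{2} \int Ef\, d\mu$, which is precisely the claimed (\ref{sLSI}).

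The substantive obstacle, already resolved by the preceding lemmas, is justifying the differentiation under the integral in the definition of $\alpha(r)^{q(r)} = \int f(rx)^{q(r)} \, \mu(dx)$: the integrand involves a variable exponent $q(r)$ and admits no pointwise dominator for a general subharmonic $f$. The escape, exploited by Lemma~\ref{lemma derivative rotation}, is to dominate only the \emph{rotational averages} of the relevant $r$-derivatives; these averages inherit subharmonicity and rotational invariance, so Proposition~\ref{prop sh rotation} makes them radially monotone and Lemma~\ref{lemma bound Ek} controls their Euler derivative. This is precisely why both the rotational invariance of $\mu$ and the Euclidean exponential type $q_0$ hypothesis are indispensable.
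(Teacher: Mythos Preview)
Your proof is correct and follows essentially the same approach as the paper's own proof: verify the hypotheses of Lemma~\ref{lemma derivative rotation} via Propositions~\ref{prop bound f1} and~\ref{prop bound f2}, use (\ref{sHC}) through Proposition~\ref{prop SHC increasing} to deduce $\alpha'(1)\ge 0$, and then read off (\ref{sLSI}) from the derivative formula (\ref{eq derivative 0}) at $r=1$. You are somewhat more explicit than the paper in naming the propositions that supply the $L^1$ dominators, but the logical structure is identical.
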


\begin{proof} Under the conditions stated above, the results of the preceding section show that the function $\alpha=\alpha_{f,\mu}$ is differentiable on $(\e',1]$ for some $\e'\in(0,1)$.  Since $\mu$ satisfies strong hypercontractivity, Proposition \ref{prop SHC increasing} shows that the function $\alpha$ is non-decreasing on $(0,1]$.  It therefore follows that $\alpha'(r) \ge 0$ for $r\in(\e',1]$ (here $\alpha'(1)$ denotes the left-derivative).  Hence, from (\ref{eq derivative 0}) we have, for $r\in(\e',1]$,
\[
\|f_r\|_{q(r)}^{q(r)} \log \|f_r\|_{q(r)}^{q(r)} - \int f(rx)^{q(r)} \log f(rx)^{q(r)}\,\mu(dx)
+ \frac{cq(r)}{2}\int f(rx)^{q(r)-1} Ef(rx)\,\mu(dx) \ge 0.
\]
At $r=1$, this reduces precisely to (\ref{sLSI}), proving the result.
\end{proof}

Theorem \ref{theorem ==> LSI} is part (2) of Theorem \ref{thm sLSI <=> sHC}.  The proof of (1) is essentially the same.

%
%
%
%
%
%
%
%
%

\end{document}